\newtheorem{theorem}{Theorem}[section]
\newtheorem{lemma}[theorem]{Lemma}
\newtheorem{corollary}[theorem]{Corollary}
\theoremstyle{definition}
\theoremstyle{remark}
\DeclareMathOperator{\sign}{sgn}
\DeclareMathOperator{\arccosh}{arccosh}
\DeclareMathOperator{\Arg}{Arg}
\renewcommand{\Re}{\operatorname{Re}}
\renewcommand{\Im}{\operatorname{Im}}
\begin{document}
\title{An elementary and unified proof of Grothendieck's inequality}
\author{Shmuel~Friedland}
\address{Department of Mathematics, Statistics and Computer Science,  University of Illinois, Chicago, IL 60607-7045}
\email{friedlan@uic.edu}
\author{Lek-Heng~Lim}
\address{Computational and Applied Mathematics Initiative, Department of Statistics,
University of Chicago, Chicago, IL 60637-1514}
\email[corresponding author]{lekheng@galton.uchicago.edu}
\author{Jinjie~Zhang}
\address{Department of Statistics, University of Chicago, Chicago, IL 60637-1514}
\curraddr{Department of Mathematics, University of California, San Diego, CA 92093-0112}
\email{jiz003@ucsd.edu}

\begin{abstract}
We present an elementary, self-contained proof of Grothendieck's inequality that unifies the real and complex cases and yields both the Krivine and Haagerup bounds, the current best-known explicit bounds for the real and complex Grothendieck constants respectively.  This article is intended to be pedagogical, combining and streamlining known ideas of Lindenstrauss--Pe{\l}czy\'nski, Krivine, and Haagerup into a proof that need only univariate calculus, basic complex variables, and a modicum of linear algebra as prerequisites.
\end{abstract}

\keywords{Grothendieck inequality, Grothendieck constant, Krivine bound, Haagerup bound}

\subjclass[2010]{47A07, 46B28, 46B85, 81P40, 81P45, 03D15, 97K30, 47N10, 90C27}
\maketitle

\section{Introduction}\label{sec:intro}

We will let $\mathbb{F} = \mathbb{R}$ or $\mathbb{C}$ throughout this article. In $1953$, Grothendieck proved a powerful result that he called ``the fundamental theorem in the metric theory of tensor products''  \cite{Grothendieck}; he showed that there exists a finite  constant $K > 0$ such that for every $l,m,n\in\mathbb{N}$ and every matrix $M=(M_{ij})\in\mathbb{F}^{m\times n}$,
\begin{equation}\label{GI}
\max_{\lVert x_i\rVert = \lVert y_j \rVert = 1}\biggl| \sum_{i=1}^m\sum_{j=1}^n M_{ij} \langle x_{i}, y_{j}\rangle\biggr|\leq K \max_{\lvert \varepsilon_i \rvert = \lvert \delta_j \rvert = 1}\biggl|\sum_{i=1}^m\sum_{j=1}^n M_{ij} \varepsilon_i \delta_j\biggr|
\end{equation}
where $\langle \cdot,\cdot \rangle$ is the standard inner product in $\mathbb{F}^l$, the maximum on the left is taken over all $x_i,y_j\in\mathbb{F}^{l}$ of unit $2$-norm,  and the maximum on the right is taken over all  $\varepsilon_i, \delta_j \in \mathbb{F}$ of unit absolute value (i.e., $\varepsilon_i = \pm 1$,  $\delta_j=\pm1$ over $\mathbb{R}$; $\varepsilon_i = e^{i\theta_i}$, $\delta_j= e^{i \phi_j}$ over $\mathbb{C}$). The inequality \eqref{GI} has since been christened \emph{Grothendieck's inequality} and the smallest possible constant $K$ \emph{Grothendieck's constant}. The value of Grothendieck's constant depends on the choice of $\mathbb{F}$ and we will denote it by $K_G^\mathbb{F}$.

Over the last 65 years, there have been many attempts to improve and simplify the proof of Grothendieck's inequality, and also to obtain better bounds for the Grothendieck constant $K_G^\mathbb{F}$, whose exact value remains unknown. The following are some major milestones:
\begin{enumerate}[\upshape (i)]
\item The central result of Grothendieck's original paper \cite{Grothendieck} is that his eponymous inequality holds with $\pi/2\leq K_G^\mathbb{R}\leq \sinh (\pi/2) \approx 2.301$ and $1.273 \approx 4/\pi\leq K_G^\mathbb{C}$. Grothendieck relied on the sign function for the real case and obtained the complex case from the real case via a complexification argument.

\item The power of Grothendieck's inequality was not generally recognized until the work of Lindenstrauss and Pe{\l}czy\'nski \cite{Lindenstrauss} 15 years later, which connected the inequality to absolutely $p$-summing operators.  They elucidated and improved Grothendieck's proof in the real case by computing expectations of sign functions and using Taylor expansions, although they did not get better bounds for $K_G^\mathbb{R}$.

\item Rietz \cite{Rietz} obtained a slightly smaller bound $K_G^\mathbb{R}\leq2.261$ in 1974 by averaging over $\mathbb{R}^n$ with normalized Gaussian measure and using a variational argument to determine an optimal scalar map corresponding to the sign function.

\item Our current best known upper bounds for $K_G^\mathbb{R}$ and $K_G^\mathbb{C}$ are due to Krivine \cite{Krivine2}, who in 1979 used Banach space theory  and ideas in \cite{Lindenstrauss} to get 
\[
K_G^\mathbb{R}\leq \frac{\pi}{2\log(1+\sqrt{2})}\approx 1.78221;
\]
and Haagerup \cite{Haagerup}, who in 1987 extended Krivine's ideas to $\mathbb{C}$ to get
\[
K_G^\mathbb{C}\leq  \frac{8}{\pi(x_0+1)} \approx 1.40491,
\]
where $x_0 \in [0,1]$ is the unique solution to:
\[
x\int_0^{\pi/2}\frac {\cos^2t}{\sqrt{1-x^2\sin^2t}}\,dt=\frac \pi 8 (x+1).
\]

\item Our current best known lower bounds for $K_G^\mathbb{R}$ and $K_G^\mathbb{C}$ are due to  Davie \cite{Davie,Davie2}, who in 1984 used spherical integrals to get
\begin{gather*}
K_G^\mathbb{R} \geq   \sup_{x\in(0,1)} \frac{1-\rho(x)}{\max(\rho(x),f(x))} \approx 1.67696,
\shortintertext{where}
\rho(x) \coloneqq \sqrt{\frac 2 \pi}xe^{-x^2/2}, \quad
f(x) \coloneqq \frac 2 \pi e^{-x^2}+\rho(x)\biggl[1-\sqrt{\frac{8}{\pi}}\int_x^\infty e^{-t^2/2}\, dt \biggr];
\shortintertext{and}
K_G^\mathbb{C}\geq  \sup_{x>0}\frac{1-\theta(x)}{g(x)} \approx 1.33807,
\shortintertext{where}
\begin{aligned}
\theta(x)&\coloneqq \frac 1 2 \biggl[1-e^{-x^2}+x\int_x^\infty e^{-t^2}\, dt \biggr],\\
g(x)&\coloneqq \biggl[ \frac 1 x (1-e^{-x^2})+\int_x^\infty e^{-t^2}\, dt  \biggr]^2+\theta(x)\biggl[ 1-\frac 2 x (1-e^{-x^2})\biggr].
\end{aligned}
\end{gather*}

\item Progress on improving the aforementioned bounds halted for many years. Believing that Krivine's bound is the exact value of $K_G^\mathbb{R}$, some were  spurred to find matrices that yield it as the lower bound of $K_G^\mathbb{R}$ \cite{Konig}. The belief was dispelled in $2011$ in a landmark paper \cite{Braverman}, which demonstrated the existence of a positive constant $\varepsilon$ such that $K_G^\mathbb{R}<\pi/\bigl(2\log(1+\sqrt{2})\bigr)-\varepsilon$ but the authors did not provide an explicit better bound. To date,  Krivine's  and Haagerup's bounds remain the best known explicit upper bounds for $K_G^\mathbb{R}$  and $K_G^\mathbb{C}$ respectively.

\item  There have also been many alternate proofs of Grothendieck's inequality employing a variety of techniques, among them 
factorization of Hilbert spaces \cite{Maurey, Jameson,Pisier},
absolutely summing operators \cite{Diestel2, Lindenstrauss,Pisier2},
geometry of Banach spaces \cite {Albiac, Lindenstrauss2},
metric theory of tensor product \cite{Diestel},
basic probability theory \cite{Blei},
bilinear forms on $C^{*}$-algebra \cite{Kaij}.
\end{enumerate}

In this article, we will present a proof of Grothendieck's inequality that unifies both the (a) real and
(b) complex cases; and yields both the (c) Krivine and (d) Haagerup
bounds \cite{Krivine2, Haagerup}. It is also elementary in that it requires
little more than standard college mathematics. Our proof will rely on Lemma~\ref{2-thm1}, which
is a variation of known ideas in \cite{Lindenstrauss, Haagerup, Jameson}.
In particular, the idea of using the sign function to establish \eqref{GI}
in the real case was due to Grothendieck himself \cite{Grothendieck} and
later also appeared in \cite{Lindenstrauss, Krivine2}; whereas the use of the
sign function in the complex case first appeared in \cite{Haagerup}. To be
clear, all the key ideas in our proof were originally due to
Lindenstrauss--Pe{\l}czy\'nski, Krivine, and Haagerup \cite{Lindenstrauss,
Krivine2, Haagerup}, our only contribution is pedagogical --- combining,
simplifying, and streamlining their ideas into what we feel is a more
palatable proof. To understand the proof, readers need only know univariate calculus, basic complex variables, and a small amount of linear algebra. We will use some basic Hilbert space theory and tensor product constructions in Section~\ref{sec:proof} but both notions will be explained in a self-contained and elementary way.

\section{Gaussian integral of sign function}\label{sec:gauss}

Throughout this article, our inner product over $\mathbb{C}$ will be sesquilinear in the second argument, i.e.,
\[
\langle x,y\rangle := y^* x \qquad \text{for all}\; x, y \in \mathbb{C}^n.
\]

For $z\in\mathbb{F}= \mathbb{R}$ or $\mathbb{C}$, the sign function is
\begin{equation}\label{eq:sgn}
\sign(z)=\begin{cases}
z/|z| & \text{if}\; z\neq0, \\
0 & \text{if}\; z=0;
\end{cases}
\end{equation}
and for $z \in \mathbb{F}^n$, the Gaussian function is
\[
G_n^\mathbb{F}(z)=\begin{cases}
(2\pi)^{-n/2}\exp\bigl(- \|z\|_{2}^{2}/2\bigr) &\text{if}\; \mathbb{F}=\mathbb{R},\\
\pi^{-n} \exp(-\|z\|_2^2) &\text{if}\; \mathbb{F}=\mathbb{C}.
\end{cases}
\]
Lemma~\ref{2-thm1} below is based on \cite{Jameson, Haagerup}; the complex version in particular is a slight variation of \cite[Lemma~3.2]{Haagerup}. It plays an important role in our proof because the right side of \eqref{2-eq1}  depends only on the inner product $\langle u,v\rangle$ and not (explicitly) on the dimension $n$. In addition, the functions on the right are homeomorphisms and admit Taylor expansions, making it possible to expand them in powers $\langle u,v\rangle^d$, which will come in useful when we prove Theorem~\ref{2-thm2}.
\begin{lemma}\label{2-thm1}
Let $u,v \in \mathbb{F}^{n}$ with $\|u\|_2=\|v\|_2=1$. Then
\begin{equation}\label{2-eq1}
\int_{\mathbb{F}^n}  \sign\langle u,z\rangle \sign\langle z,v \rangle G_n^\mathbb{F}(z)\,dz=\begin{cases}
\dfrac{2}{\pi}\arcsin\langle u,v\rangle &\text{if} \; \mathbb{F}=\mathbb{R},\\
\displaystyle\langle u,v\rangle \int_0^{\pi / 2}
\frac{\cos^2t}{(1-|\langle u,v\rangle|^2\sin^2t)^{1/2}}\,dt &\text{if} \; \mathbb{F}=\mathbb{C}.
\end{cases}
\end{equation}
\end{lemma}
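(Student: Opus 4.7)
The plan is to use the orthogonal/unitary invariance of $G_n^{\mathbb{F}}(z)\,dz$ and of $\langle\cdot,\cdot\rangle$ to reduce to a two-dimensional integral, then compute. For any $\mathbb{F}$-unitary $U$, substituting $z \mapsto Uz$ preserves the Gaussian and changes $(u,v)$ to $(U^*u, U^*v)$; choosing $U$ so that $U^*u = e_1$ and $U^*v = \bar{\gamma}\,e_1 + \beta\,e_2$, where $\gamma := \langle u, v\rangle$ and $\beta := \sqrt{1 - |\gamma|^2}\geq 0$, collapses the integral to $\mathbb{F}^2$ (the remaining $n-2$ Gaussian factors integrate to $1$). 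In these coordinates $\langle u, z\rangle = \bar z_1$ and $\langle z, v\rangle = \gamma z_1 + \beta z_2$, so the integrand is $\overline{\sign z_1}\cdot \sign(\gamma z_1 + \beta z_2)$.

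For $\mathbb{F}=\mathbb{R}$, writing $\gamma = \cos\theta$ and $\beta = \sin\theta$ with $\theta\in[0,\pi]$ and passing to polar coordinates $z = (r\cos\phi, r\sin\phi)$, the radial part of the Gaussian integrates to $1$ and leaves
\[
I = \frac{1}{2\pi}\int_0^{2\pi} \sign(\cos\phi)\,\sign(\cos(\phi-\theta))\,d\phi.
\]
Partitioning $[0,2\pi]$ into arcs where each factor is constant shows that the product equals $-1$ on two arcs of total length $2\theta$ and $+1$ elsewhere, so $I = 1 - 2\theta/\pi$, which equals $(2/\pi)\arcsin\gamma$ via $\arcsin(\cos\theta) = \pi/2-\theta$.

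For $\mathbb{F}=\mathbb{C}$, I use polar coordinates $z_j = r_j e^{i\theta_j}$: the $e^{-i\theta_1}$ from $\overline{\sign z_1}$ cancels the $e^{i\theta_1}$ hidden inside $\sign(\gamma z_1 + \beta z_2)$, and after substituting $\phi = \theta_2-\theta_1$ the integrand no longer depends on $\theta_1$. Performing the $z_2$-integral via $w = \gamma z_1 + \beta z_2$ followed by the rescaling $w \mapsto \beta w$ reduces the problem to evaluating the auxiliary function
\[
h(c) := \pi^{-1}\int_{\mathbb{C}} \sign(w)\,e^{-|w-c|^2}\,dw
\]
at $c = \gamma z_1/\beta$. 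Rotational symmetry (substitute $w\mapsto(c/|c|)w$) yields $h(c) = (c/|c|)\,h(|c|)$, and for $t>0$ real I compute $h(t)$ by applying the Gamma-type representation $|w|^{-1} = \pi^{-1/2}\int_0^\infty \tau^{-1/2}e^{-\tau|w|^2}\,d\tau$, swapping order of integration, completing the square, and performing the Gaussian in $w$; the successive substitutions $s = 1/(1+\tau)$ and $s = \sin^2 t$ reshape the result into
\[
h(t) = \frac{2t}{\sqrt{\pi}}\int_0^{\pi/2} \sin^2 s\,e^{-t^2\cos^2 s}\,ds.
\]

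Inserting $h(\gamma z_1/\beta)$ back and performing the remaining Gaussian integral in $|z_1|$ produces $I = \gamma\beta^2\int_0^{\pi/2} \sin^2 s\,(1-|\gamma|^2\sin^2 s)^{-3/2}\,ds$; a concluding integration by parts using
\[
\frac{d}{ds}\!\left[\frac{-\cos s}{\sqrt{1-a^2\sin^2 s}}\right] = \frac{(1-a^2)\sin s}{(1-a^2\sin^2 s)^{3/2}}
\]
absorbs the $\beta^2 = 1-|\gamma|^2$ factor and yields the stated form. The main obstacle is the sub-integral for $h(t)$: every other step is careful polar-coordinate bookkeeping, but $h$ requires both the $|w|^{-1}$ integral representation and the $s=\sin^2 t$ substitution, and the concluding integration by parts is the small but essential trick that converts the $(1-a^2\sin^2 s)^{-3/2}$ integrand into Haagerup's $(1-a^2\sin^2 s)^{-1/2}$ form.
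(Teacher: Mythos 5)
Your argument is correct, and for the complex case it is a genuinely different route from the paper's. In the real case both you and the paper reduce by an orthogonal change of variables to a two-dimensional sector/arc computation, so that part is essentially the same proof in slightly different notation. For the complex case the paper does \emph{not} compute the two-dimensional complex Gaussian integral directly; instead it uses the identity
\[
\sign(z)=\frac{1}{4}\int_0^{2\pi}\sign\bigl(\Re(e^{-i\theta}z)\bigr)e^{i\theta}\,d\theta,
\]
applies Fubini to express the complex integral as an average of real-case integrals (yielding $\frac{1}{8\pi}\iint \arcsin(\Re\langle e^{-i\theta}u,e^{i\varphi}v\rangle)e^{i(\theta+\varphi)}\,d\theta\,d\varphi$), and then manipulates the angular integrals and integrates by parts. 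You instead integrate out $z_2$ first to get the auxiliary function $h(c)=\pi^{-1}\int_{\mathbb{C}}\sign(w)e^{-|w-c|^2}\,dw$, evaluate $h$ by a $\Gamma$-type integral representation of $|w|^{-1}$ followed by completing the square and the substitutions $s=1/(1+\tau)$ and $s=\sin^2\sigma$, and finish with a radial Gaussian and an integration by parts. Both work; the paper's reduction-to-real-case trick is arguably slicker (it reuses Case I wholesale and avoids the $\Gamma$-function device), while yours is a more direct ``just compute the 2D integral'' argument that does not require noticing the identity for $\sign$. I checked your constants through: the $z_2$ integral gives $h(\gamma z_1/\beta)$, the radial integral produces $\gamma\beta^2\int_0^{\pi/2}\sin^2\sigma\,(1-|\gamma|^2\sin^2\sigma)^{-3/2}\,d\sigma$, and the final integration by parts using $\frac{d}{d\sigma}\bigl[-\cos\sigma\,(1-a^2\sin^2\sigma)^{-1/2}\bigr]=(1-a^2)\sin\sigma\,(1-a^2\sin^2\sigma)^{-3/2}$ gives exactly the claimed form, with vanishing boundary terms. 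Two small points worth adding for completeness: the rescaling $z_2\mapsto(w-\gamma z_1)/\beta$ requires $\beta>0$, so the degenerate case $|\langle u,v\rangle|=1$ should be handled separately or by continuity (it is immediate: then $\langle z,v\rangle=\gamma\overline{\langle u,z\rangle}$ a.e.\ and both sides equal $\gamma$); and the Fubini swap inside the $h(t)$ computation should be justified by noting $\int_0^\infty\tau^{-1/2}e^{-\tau|w|^2}\,d\tau=\sqrt{\pi}/|w|$ makes the absolute integrand $\sqrt{\pi}\,e^{-|w-t|^2}$, which is integrable.
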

\begin{proof}
\textsc{Case} I: $\mathbb{F}=\mathbb{R}$.\; Let $\arccos\langle u,v\rangle=\theta$, so that $\theta\in[0,\pi]$ and $\arcsin\langle u,v\rangle=\pi / 2-\theta$.
Choose $\alpha,\beta$ such that $0<\beta-\alpha<\pi$ and define
\[
E(\alpha,\beta)=\{(r\cos\theta,r\sin\theta,x_{3},\dots,x_{n}):r\geq 0, \alpha\leq\theta\leq\beta\}.
\]
The Gaussian measure of a measurable set $A$ is the integral of  $G_n^\mathbb{R}(x)$ over $A$. Upon integrating with respect to $x_{3},\dots,x_{n}$, the following term remains:
\[
\frac {1}{2\pi}\int_{E(\alpha,\beta)} e^{-\frac {1}{2} (x_{1}^{2}+x_{2}^{2}) }\,dx_{1}\,dx_{2}=\frac {1}{2\pi}\int_{\alpha}^{\beta}d\theta\int_{0}^{\infty}re^{-\frac 1 2 r^{2}}\,dr=(\beta-\alpha)/2\pi.
\]
Hence the Gaussian measure of $E(\alpha,\beta)$ is $(\beta-\alpha)/2\pi$. Since there is an isometry $T$ of $\mathbb{R}^{n}$ such that $Tu=e_{1}$ and $Tv=(\cos\theta,\sin\theta,0,\dots,0)$, the left side of \eqref{2-eq1} may be expressed as
\[
\int_{\mathbb{R}^{n}}\sign\langle Tu,x\rangle \sign\langle x,Tv \rangle G_n^\mathbb{R}(x)\,dx.
\]
The set of $x$ where $\langle Tu,x\rangle>0$ and $\langle Tv,x\rangle>0$ is $E(\theta-\pi /2, \pi / 2)$, which has Gaussian measure $(\pi-\theta)/2\pi$; ditto for $\langle Tu,x\rangle < 0$ and $\langle Tv,x\rangle < 0$. The set of $x$ where $\langle Tu,x\rangle<0$ and $\langle Tv,x\rangle>0$ is $E(\pi / 2,\theta+ \pi / 2)$, which has Gaussian measure $\theta/2\pi$; ditto for $\langle Tu,x\rangle > 0$ and $\langle Tv,x\rangle < 0$. The set of $x$ where  $\langle Tu,x\rangle =0$ has zero Gaussian measure.
Hence the value of this integral is $ (\pi-\theta)/2\pi +(\pi-\theta)/2\pi  -\theta/2\pi  - \theta/2\pi =2\arcsin\langle u,v\rangle/\pi $.

\medskip

\noindent\textsc{Case} II: $\mathbb{F}=\mathbb{C}$.\; We define vectors $\alpha,\beta\in\mathbb{R}^{2n}$ with $\alpha_{2i-1}=\Re(u_i)$, $\alpha_{2i}=\Im(u_i)$, $\beta_{2i-1}=\Re(v_i)$, $\beta_{2i}=\Im(v_i)$,  $i=1,\dots,n$. Then $\alpha$ and $\beta$ are unit vectors in $\mathbb{R}^{2n}$.
For any $ z = (z_1,\dots,z_n)\in\mathbb{C}^n$, we write
\begin{gather*}
x=\bigl(\Re(z_1),\Im(z_1),\dots,\Re(z_n),\Im(z_n)\bigr)\in\mathbb{R}^{2n}.
\shortintertext{Then,}
\Re(\langle u,z\rangle)=\sum_{i=1}^n\Re(u_i\overline{z}_i)=\sum_{i=1}^n\bigl(\Re(u_i)\Re(z_i)+\Im(u_i)\Im(z_i)\bigr)
=\langle \alpha,x\rangle=\langle x,\alpha\rangle,
\end{gather*}
and likewise $\Re(\langle z,v\rangle) =\langle x,\beta\rangle$. 
By a change-of-variables and Case~I, we have
\begin{align}
\int_{\mathbb{C}^n}\sign\bigl(\Re\langle u,z\rangle\bigr) \sign\bigl(\Re\langle z,v\rangle\bigr) G_n^\mathbb{C}(z)\,dz
&= \int_{\mathbb{R}^{2n}}\sign\langle x,\alpha\rangle \sign\langle x,\beta\rangle G_{2n}^\mathbb{R}(x)\,dx \nonumber\\
&= \frac 2 \pi \arcsin \langle \alpha, \beta\rangle 
= \frac 2 \pi \arcsin(\Re\langle u,v\rangle).\label{2-eq3}
\end{align}
It is easy to verify that for any $z\in\mathbb{C}$, 
\begin{equation}\label{2-eq4}
\sign(z)=\frac 1 4\int_0^{2\pi}\sign(\Re(e^{-i\theta}z))e^{i\theta}\,d\theta.
\end{equation}
By  \eqref{2-eq3}, \eqref{2-eq4}, and Fubini's theorem,
\begin{align}
\int_{\mathbb{C}^n}&\sign\langle u,z\rangle \sign\langle z,v \rangle G_n^\mathbb{C}(z)\,dz \nonumber \\
&= \frac 1 {16}\int_0^{2\pi}\int_0^{2\pi}\int_{\mathbb{C}^n}\sign(\Re(\langle e^{-i\theta}u, z\rangle))
\sign(\Re(\langle z,\overline{e^{-i\varphi}}v\rangle))e^{i(\theta+\varphi)}G_n^\mathbb{C}(z)\,dz\,d\theta \,d\varphi  \nonumber  \\
&=\frac 1{8\pi}\int_0^{2\pi}\int_0^{2\pi}\arcsin ( \Re(\langle e^{-i\theta}u,e^{i\varphi}v\rangle))e^{i(\theta+\varphi)}\,d\theta \,d\varphi  \nonumber 
\intertext{\textsc{Case} II(a): $\langle u,v\rangle\in\mathbb{R}$.\; The integral above becomes}
&=\frac 1{8\pi}\int_0^{2\pi}\biggl[\int_0^{2\pi}\arcsin(\cos(\theta+\varphi)\langle u,v\rangle) e^{i(\theta+\varphi)}\,d\theta \biggr]\,d\varphi  \nonumber \\
&=\frac 1{8\pi}\int_0^{2\pi}\biggl[\int_{\varphi}^{2\pi+\varphi}\arcsin(\langle u,v\rangle\cos t) e^{it}\,dt \biggr]\,d\varphi  \nonumber \\
&=\frac 1{8\pi}\int_0^{2\pi}\biggl[\int_0^{2\pi}\arcsin(\langle u,v\rangle\cos t) e^{it}\,dt \biggr]\,d\varphi
=\frac 1 4\int_0^{2\pi}\arcsin(\langle u,v\rangle\cos t) e^{it}\,dt. \label{eq:last}
\end{align}
Since $\arcsin(\langle u,v\rangle\cos t)$ is an even function with period $2\pi$,
\[
\int_0^{2\pi}\arcsin(\langle u,v\rangle\cos t)\sin t \,dt=0,
\]
the last integral in \eqref{eq:last} becomes
\begin{gather}
\frac 1 4\int_0^{2\pi}\arcsin(\langle u,v\rangle\cos t)\cos t \,dt,  \nonumber
\intertext{and as $\arcsin(\langle u,v\rangle\cos t)\cos t$ is an even function with period $\pi$, it becomes}
\int_0^{\pi/2}\arcsin(\langle u,v\rangle\cos t)\cos t \,dt = \int_0^{\pi /2}\arcsin(\langle u,v\rangle\sin t)\sin t \,dt, \nonumber
\intertext{which, upon integrating by parts, becomes}
\langle u,v\rangle\int_0^{\pi /2}\frac{\cos^2t}{(1-|\langle u,v\rangle|^2\sin^2t)^{1/2}}\,dt. \label{2-eq5}
\end{gather}

\noindent \textsc{Case} II(b): $\langle u,v\rangle\notin\mathbb{R}$.\; This reduces to Case~II(a) by setting $c\in\mathbb{C}$ of unit modulus so that $c\langle u,v\rangle=|\langle u,v\rangle|$ and $\langle cu,v\rangle\in\mathbb{R}$, then by \eqref{2-eq5},
\begin{multline*}
\int_{\mathbb{C}^n}\sign\langle u,z\rangle \sign\langle z,v \rangle G_n^\mathbb{C}(z)\,dz
=\overline{c}\int_{\mathbb{C}^n}\sign\langle cu,z\rangle \sign\langle z,v \rangle G_n^\mathbb{C}(z)\,dz\\
=\overline{c}\langle cu,v\rangle\int_0^{\pi /2}\frac{\cos^2t}{(1-|\langle cu,v\rangle|^2\sin^2t)^{1/2}}\,dt
=\langle u,v\rangle\int_0^{ \pi/ 2}\frac{\cos^2t}{(1-|\langle u,v\rangle|^2\sin^2t)^{1/2}}\,dt.
\end{multline*}
\end{proof}

We will make a simple but useful observation\footnote{This of course follows from other well-known results but we would like to keep our exposition self-contained.} about the quantities in \eqref{GI} that we will need for the proof of Corollary~\ref{2-cor1} later.
\begin{lemma}\label{lem:maxepsdelta}
Let $\mathbb{F} = \mathbb{R}$ or $\mathbb{C}$ and $d,m,n\in\mathbb{N}$. For any $M\in\mathbb{F}^{m\times n}$, we have
\begin{gather}
\max_{\lvert \varepsilon_i \rvert \le 1, \; \lvert \delta_j \rvert \le 1}\biggl|\sum_{i=1}^m\sum_{j=1}^n M_{ij} \varepsilon_i \delta_j\biggr|=
\max_{\lvert \varepsilon_i \rvert = \lvert \delta_j \rvert = 1}\biggl|\sum_{i=1}^m\sum_{j=1}^n M_{ij} \varepsilon_i \delta_j\biggr| \label{eq:maxepsdelta}
\shortintertext{and for any $x_1,\dots,x_m, y_1,\dots,y_n\in\mathbb{F}^{d}$,}
\max_{\lVert x_i\rVert \le 1, \; \lVert y_j \rVert \le 1}\biggl| \sum_{i=1}^m\sum_{j=1}^n M_{ij} \langle x_{i}, y_{j}\rangle\biggr|=
\max_{\lVert x_i\rVert = \lVert y_j \rVert = 1}\biggl| \sum_{i=1}^m\sum_{j=1}^n M_{ij} \langle x_{i}, y_{j}\rangle\biggr|. \label{eq:maxxy}
\end{gather}
\end{lemma}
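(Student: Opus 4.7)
My plan is to exploit convexity: in both \eqref{eq:maxepsdelta} and \eqref{eq:maxxy}, when all variables but one are held fixed, the modulus inside the maximum is the absolute value of an $\mathbb{R}$-affine function of the remaining variable, hence a convex function. The feasible set for that single variable is either the closed unit disk in $\mathbb{F}$ or the closed Euclidean unit ball in $\mathbb{F}^d$; both are compact convex sets whose extreme points are precisely the points of unit modulus (respectively, unit $2$-norm). A convex function on a compact convex set attains its maximum at an extreme point, so one can push a maximizer to the unit sphere one coordinate at a time. The direction $\ge$ in both identities is immediate because the left-hand feasible region contains the right-hand one, so only the direction $\le$ requires work.

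For \eqref{eq:maxepsdelta}, I would choose, by compactness, a maximizer $(\varepsilon_1,\dots,\varepsilon_m,\delta_1,\dots,\delta_n)$ with $|\varepsilon_i|\le 1$ and $|\delta_j|\le 1$. Fixing every variable except $\varepsilon_1$, the objective is $|A\varepsilon_1+B|$ for some $A,B\in\mathbb{F}$ depending only on the fixed variables, which is convex on the closed unit disk of $\mathbb{F}$. Its extreme points are $\{\pm 1\}$ when $\mathbb{F}=\mathbb{R}$ and the unit circle when $\mathbb{F}=\mathbb{C}$, so there exists $\varepsilon_1'$ with $|\varepsilon_1'|=1$ and $|A\varepsilon_1'+B|\ge|A\varepsilon_1+B|$. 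Replacing $\varepsilon_1$ by $\varepsilon_1'$ and iterating this procedure through every $\varepsilon_i$ and $\delta_j$ yields a maximizer of the same value in which every coordinate has unit modulus, establishing \eqref{eq:maxepsdelta}.

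For \eqref{eq:maxxy}, the argument is structurally identical. Starting from a maximizer $(x_1,\dots,x_m,y_1,\dots,y_n)$ in the closed unit balls, fix all variables except $x_i$; using the convention $\langle x,y\rangle=y^{*}x$ from the paper, the objective becomes $|\langle x_i,w\rangle+C|$ where $w\coloneqq\sum_{j=1}^n \overline{M_{ij}}\,y_j\in\mathbb{F}^d$ and $C\in\mathbb{F}$ depend only on the fixed variables (over $\mathbb{C}$ the map is conjugate-linear in $y_j$, but that is still $\mathbb{R}$-affine, so the symmetric argument applies to each $y_j$ as well). This function is convex in $x_i$ over the underlying real vector space of $\mathbb{F}^d$, and the extreme points of the Euclidean closed unit ball are precisely the unit-norm vectors, so $x_i$ may be replaced by a unit-norm vector without decreasing the objective. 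Iterating through all $x_i$ and $y_j$ gives \eqref{eq:maxxy}. I do not foresee a serious obstacle: the only ingredient beyond routine manipulation is the identification of the extreme points of the complex unit disk and of the Euclidean unit ball, both of which are standard; everything else is bookkeeping.
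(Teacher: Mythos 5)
Your proof is correct and rests on the same key observation the paper uses — that the objective, with all variables but one fixed, is the modulus of an affine function of that one variable and therefore attains its maximum on the boundary of the disk or ball. The paper phrases this as a short contradiction argument while you frame it via convexity and extreme points and iterate constructively, but these are the same idea.
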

\begin{proof}
We will start with \eqref{eq:maxepsdelta}.
Suppose there exists $M\in \mathbb{F}^{m\times n}$ such that the left-hand side of \eqref{eq:maxepsdelta} exceeds the right-hand side.  Let the maximum of the left-hand side be attained by $\varepsilon_1^*,\dots,\varepsilon_m^*$ and  $\delta_1^*,\dots,\delta_n^*$.
By our assumption, at least one $\varepsilon_i^*$ or $\delta_j^*$ must be  less than $1$ in absolute value and so let $|\varepsilon_1^*|<1$ without loss of generality.  Fix $\varepsilon_i = \varepsilon_i^*$, $i = 2,\dots,m$ and $\delta_j = \delta_j^*$, $j=1,\dots,n$, but let $\varepsilon_1$ vary with $|\varepsilon_1|\le 1$ and consider the maximum of the left hand-side over $\varepsilon_1$.  Since $\max\{\lvert a\varepsilon_1+b\rvert : \lvert\varepsilon_1\rvert \le 1\}$ is always attained on the boundary $\lvert \varepsilon_1\vert=1$ for any $a,b\in \mathbb{F}$, this contradicts our assumption. The proof for \eqref{eq:maxxy} is similar with norm in place of absolute value.
\end{proof}

In the corollary below, the  inequality on the left is the ``original Grothendieck inequality,'' i.e., as first stated by Grothendieck\footnote{The better known modern version \eqref{GI} is in fact due to Lindenstrauss and Pe{\l}czy\'nski in \cite{Lindenstrauss}.} in \cite{Grothendieck}, and the inequality on the right is due to Haagerup \cite{Haagerup}. 
\begin{corollary}\label{2-cor1}
Let $\mathbb{F} = \mathbb{R}$ or $\mathbb{C}$ and $d,m,n\in\mathbb{N}$. For any $M\in\mathbb{F}^{m\times n}$ with
\begin{equation}\label{eq:inftyone}
\max_{\lvert \varepsilon_i \rvert = \lvert \delta_j \rvert = 1}\biggl|\sum_{i=1}^m\sum_{j=1}^n M_{ij} \varepsilon_i \delta_j\biggr|
\leq 1,
\end{equation}
any $x_1,\dots,x_m, y_1,\dots,y_n\in\mathbb{F}^{d}$ of unit $2$-norm, we have
\[
\biggl|\sum_{i=1}^m\sum_{j=1}^n M_{ij}\arcsin\langle x_{i}, y_{j}\rangle\biggr| \leq \frac \pi 2  \quad \text{if}\; \mathbb{F}=\mathbb{R},\qquad
\biggl|\sum_{i=1}^m\sum_{j=1}^n M_{ij}H(\langle x_{i}, y_{j}\rangle)\biggr| \leq 1 \quad \text{if}\; \mathbb{F}=\mathbb{C},
\]
where $H$ denotes the function on the right side of \eqref{2-eq1} for $\mathbb{F} = \mathbb{C}$.
\end{corollary}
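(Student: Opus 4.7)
The plan is to invert Lemma~\ref{2-thm1}, rewriting each $\arcsin\langle x_i,y_j\rangle$ (resp.\ $H(\langle x_i,y_j\rangle)$) as a Gaussian integral of a product of sign functions, and then to exploit the hypothesis~\eqref{eq:inftyone} pointwise in the integrand.

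First, apply Lemma~\ref{2-thm1} with $n=d$ to the unit vectors $x_i,y_j\in\mathbb{F}^d$. In the real case this gives
\[
\arcsin\langle x_i,y_j\rangle = \frac{\pi}{2}\int_{\mathbb{R}^d} \sign\langle x_i,z\rangle\,\sign\langle z,y_j\rangle\, G_d^\mathbb{R}(z)\,dz,
\]
and in the complex case
\[
H(\langle x_i,y_j\rangle) = \int_{\mathbb{C}^d} \sign\langle x_i,z\rangle\,\sign\langle z,y_j\rangle\, G_d^\mathbb{C}(z)\,dz.
\]
Multiplying by $M_{ij}$, summing over $i,j$, and moving the finite sum inside the integral (everything in sight is bounded), and writing $\varepsilon_i(z) := \sign\langle x_i,z\rangle$, $\delta_j(z) := \sign\langle z,y_j\rangle$, the quantities to be estimated become
\[
\int_{\mathbb{F}^d}\biggl(\sum_{i=1}^m\sum_{j=1}^n M_{ij}\,\varepsilon_i(z)\,\delta_j(z)\biggr) G_d^\mathbb{F}(z)\,dz,
\]
multiplied by an additional factor of $\pi/2$ in the real case.

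The key step is then the pointwise bound on the bracketed expression. By definition of $\sign$, $|\varepsilon_i(z)|\le 1$ and $|\delta_j(z)|\le 1$ for every $z$, so
\[
\biggl|\sum_{i,j} M_{ij}\,\varepsilon_i(z)\,\delta_j(z)\biggr| \le \max_{|\varepsilon_i|\le 1,\,|\delta_j|\le 1}\biggl|\sum_{i,j} M_{ij}\,\varepsilon_i\,\delta_j\biggr|,
\]
and Lemma~\ref{lem:maxepsdelta} identifies the right-hand side with the maximum over unit-modulus $\varepsilon_i,\delta_j$, which by the hypothesis~\eqref{eq:inftyone} is at most $1$. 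Integrating the pointwise bound against the probability density $G_d^\mathbb{F}$ (total mass $1$) yields $\pi/2$ in the real case and $1$ in the complex case, exactly as claimed.

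I do not expect any serious obstacle. The only subtlety worth flagging is that $\sign$ returns $0$ on the (measure-zero) sets where the inner products vanish, so a priori $\varepsilon_i(z),\delta_j(z)$ lie only in the closed unit disk rather than on the unit circle; this is exactly the reason Lemma~\ref{lem:maxepsdelta} is needed, to bridge the maximum over the disk (which bounds the integrand) and the maximum over the circle (which appears in the hypothesis).
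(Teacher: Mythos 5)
Your proof is correct and follows essentially the same route as the paper: invert Lemma~\ref{2-thm1} to write $\arcsin\langle x_i,y_j\rangle$ (resp.\ $H(\langle x_i,y_j\rangle)$) as a Gaussian integral of sign products, bound the integrand pointwise by $1$ using hypothesis~\eqref{eq:inftyone} together with Lemma~\ref{lem:maxepsdelta} (needed precisely because $\sign$ can vanish, as you note), and integrate against the Gaussian probability density. The only cosmetic difference is order of presentation: the paper states the pointwise bound on the sign-product sum first and then integrates and invokes Lemma~\ref{2-thm1}, whereas you first rewrite the target sum as an integral and then bound the integrand --- the same argument either way.
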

\begin{proof}
The condition \eqref{eq:inftyone} implies that
\begin{align*}
\biggl|\sum_{i=1}^m\sum_{j=1}^nM_{ij}\sign\langle x_{i}, x\rangle \sign\langle y_{j},x\rangle G_d^\mathbb{R}(z)\biggr|\leq G_d^\mathbb{R}(z), \\ 
\biggl|\sum_{i=1}^m\sum_{j=1}^n M_{ij}\sign\langle z,\overline{x_i}\rangle \overline{\sign\langle z,\overline{y}_j \rangle}G_d^\mathbb{C}(z)\biggr|\leq G_d^\mathbb{C}(z),
\end{align*}
for any $x \in \mathbb{R}^{d}$, $z \in \mathbb{C}^{d}$ respectively.
Integrating over $\mathbb{R}^{d}$ or $\mathbb{C}^{d}$ respectively and applying Lemma~\ref{2-thm1} give the required results. Note that we have implicitly relied on \eqref{eq:maxepsdelta} in Lemma~\ref{lem:maxepsdelta} as the $\sign$ function  is not always of absolute value one and  may be zero.
\end{proof}

Corollary~\ref{2-cor1} already looks a lot like the Grothendieck inequaltiy \eqref{GI} but the nonlinear functions $\arcsin$ and $H$ are in the way. To obtain the  Grothendieck inequality, we linearize them: First by using Taylor series to replace these functions by polynomials; and then using a `tensor trick' to express the polynomials as linear functions on a larger space. This is the gist of the proof in Section~\ref{sec:proof}.

\section{Haagerup function}\label{sec:haa}

We will need to make a few observations regarding the functions on the right side of \eqref{2-eq1} for the proof of Grothendieck's inequality. Let the \emph{complex Haagerup function} of a complex variable $z$ be
\[
H(z) \coloneqq z\int_0^{\pi / 2}\frac {\cos^2t}{(1-|z|^2\sin^2t)^{1/2}}\,dt, \qquad \lvert z \rvert \le 1, 
\]
and the \emph{real Haagerup function} $h$ as the restriction of $H$ to $[-1,1] \subseteq \mathbb{R}$. Observe that $h:[-1,1]\to [-1,1]$ and is a strictly increasing continuous bijection. Since $[-1,1]$ is compact, $h$ is a homeomorphism of $[-1,1]$ onto itself. By the Taylor expansion
\begin{gather*}
(1-x^2\sin^2t)^{-1 /2}=\sum_{k=0}^{\infty}\frac{(2k-1)!!}{(2k)!!}x^{2k}\sin^{2k}t, \qquad |x|\leq1,\; 0\leq t< \pi/ 2,
\shortintertext{and}
\int_0^{\pi / 2}\cos^2t\sin^{2k}t\,dt=\frac{\pi}{4(k+1)}\frac{(2k-1)!!}{(2k)!!},
\end{gather*}
thus we get
\begin{equation}\label{h(x)}
h(x)=\sum_{k=0}^{\infty}\frac{\pi}{4(k+1)}\biggl[\frac{(2k-1)!!}{(2k)!!}\biggr]^2x^{2k+1}, \qquad x\in[-1,1].
\end{equation}

Since $h$ is analytic at $x=0$ and $h'(0)\neq 0$, its inverse function $h^{-1}: [-1,1] \to [-1,1]$ can be expanded in a power series in some neighborhood of $0$ 
\begin{equation}\label{inverse h}
h^{-1}(x)=\sum_{k=0}^{\infty}b_{2k+1}x^{2k+1}.
\end{equation}
One may in principle determine the coefficients using the Lagrange inversion formula:
\[
b_{2k+1}=\frac{1}{(2k+1)!}\lim_{t\to 0}\biggl[\frac{d^{2k}}{dt^{2k}}\biggl(\frac{t}{h(t)}\biggr)^{2k+1}\biggr].
\]
For example,
\[
b_1=\frac{4}{\pi}, \qquad b_3=-\frac{1}{8}\Bigl(\frac{4}{\pi}\Bigr)^3, \qquad b_5=0, \qquad b_7=-\frac{1}{1024}\Bigl(\frac{4}{\pi}\Bigr)^7.
\]
But determining  $b_{2k+1}$ explicitly becomes difficult as $k$ gets larger. A key step in Haagerup's proof \cite{Haagerup} requires the nonpositivity of the coefficients beyond the first:
\begin{equation}\label{eq:nonpos}
b_{2k+1}\leq0,\qquad \text{for all}\; k\geq1.
\end{equation}
This step  is in our view the most technical part of \cite{Haagerup}. We have no insights on how it may be avoided but we simplified Haagerup's proof of \eqref{eq:nonpos} in Section~\ref{non-positivity} to keep to our promise of an elementary proof --- using only calculus and basic complex variables.

It follows from \eqref{eq:nonpos} that  $\widetilde{h}(z) \coloneqq b_1 z -h^{-1}(z)$ has nonnnegative Taylor coefficients.   Pringsheim's theorem implies  that if the radius of convergence of the Taylor series of $\widetilde{h}(z)$ is $r$, then $\widetilde{h}(z)$, and thus $h^{-1}(z)$, has a singular point at $z = r$.  As $h'(t)>0$ on $(0,1)$ and $h(1)=1$, we must have $r\ge 1$.  It also follows from \eqref{eq:nonpos} that $h^{-1}(t)\le \sum_{k=0}^N b_{2k+1}t^{2k+1}$ for  any $t\in (0,1)$ and $N \in \mathbb{N}$.  So $\sum_{k=1}^N|b_{2k+1}|t^{2k+1}\le b_1t -h^{-1}(t)$ for any $t\in (0,1)$ and $N \in \mathbb{N}$. So $\sum_{k=1}^N|b_{2k+1}|\le b_1 -1$ for any $N \in \mathbb{N}$ and we have $\sum_{k=0}^{\infty}|b_{2k+1}|\le 2b_1 -1$.  As $h^{-1}(1)=1$ we deduce that $\sum_{k=0}^{\infty} b_{2k+1}=h^{-1}(1)=1$, and therefore  
\begin{equation}\label{eq:bound}
\sum_{k=0}^{\infty}|b_{2k+1}|= 2b_1 -1.
\end{equation}

We now turn our attention back to the complex Haagerup function. Observe that $|H(z)|=h(|z|)$ for all $z\in D \coloneqq \{z\in\mathbb{C}:|z|\leq1\}$  and $\arg(H(z))=\arg (z)$ for $0 \ne z\in D$. So $H:D\to D$ is a homeomorphism of $D$ onto itself. Let $H^{-1}:D\to D$ be its inverse function. Since $H(z)=\sign(z)h(|z|)$, we get
\begin{equation}\label{2-eq8}
H^{-1}(z)=\sign(z)h^{-1}(|z|)=\sign(z)\sum_{k=0}^{\infty}b_{2k+1}|z|^{2k+1}.
\end{equation}
Dini's theorem shows that the function $\varphi(x) \coloneqq \sum_{k=0}^{\infty}|b_{2k+1}|x^{2k+1}$ is a strictly increasing and continuous on $[0,1]$, with $\varphi(0)=0$ and $\varphi(1)=\sum_{k=0}^{\infty}|b_{2k+1}|\geq b_1= 4 / \pi>1$; note that $\varphi(1)$ is finite by \eqref{eq:bound}. Thus there exists a unique $c_0\in(0,1)$ such that $\varphi(c_0)=1$. So
\[
1=\varphi(c_0)=\sum_{k=0}^{\infty}|b_{2k+1}|c_0^{2k+1}=\frac 8 \pi c_0-h^{-1}(c_0),
\]
where the last equality follows from  $b_1= 4 /\pi$ and \eqref{eq:nonpos}.
Therefore we obtain $h^{-1}(c_0)=8 c_0/ \pi -1$, and if we let $x_0 \coloneqq h^{-1}(c_0)\in(0,1)$, then
$h(x_0)- \pi(x_0+1)/8=0$.
From the Taylor expansion of $h(x)$,  the function $x\mapsto  h(x)-\pi (x+1)/8$ is increasing and continuous on $[0,1]$. Hence $x_0$ is the unique solution in $[0,1]$  to
\begin{equation}\label{2-eq7}
h(x)-\frac \pi 8(x+1)=0
\end{equation}
and $c_0=\pi (x_0+1)/8$.

As Corollary~\ref{2-cor1} indicates, the Haagerup function $H$ plays the analogue of $\arcsin$ in the complex case. Unlike $\arcsin$, $H$ is a completely obscure function,\footnote{We are unaware of any other occurrence of $H$ outside its use in Haagerup's proof of his bound in \cite{Haagerup}.}  and any of its properties that we require will have to be established from scratch. The goal of this section is essentially to establish \eqref{h(x)}--\eqref{2-eq7}, which we will need later.

\section{A unified proof of Grothendieck's inequality}\label{sec:proof}

In this section we will need the notions of (i) tensor product and (ii)  Hilbert space, but just enough to make sense of $\mathcal{H}_n(\mathbb{F}) =\bigoplus_{k=0}^{\infty}(\mathbb{F}^n)^{\otimes(2k+1)}$ where $\mathbb{F} = \mathbb{R}$ or $\mathbb{C}$. In keeping to our promise of an elementary proof, we will briefly introduce these notions in a simple manner. For our purpose, it suffices to regard the \emph{tensor product} of $k$ copies of $\mathbb{F}^n$, denoted
\[
(\mathbb{F}^n)^{\otimes k} = \underbrace{\mathbb{F}^n \otimes \dots \otimes \mathbb{F}^n}_{k \text{ copies}},
\]
as the $\mathbb{F}$-vector space of $k$-dimensional \emph{hypermatrices},
\[
(\mathbb{F}^n)^{\otimes k} \coloneqq \bigl\{ [a_{i_1 \cdots i_k}] : a_{i_1 \cdots i_k} \in \mathbb{F}, \; i_1,\dots,i_k \in \{1,\dots,n\}\bigr\},
\]
where scalar multiplication and vector addition of hypermatrices are defined coordinatewise. For $k$ vectors $x, y, \dots, z \in \mathbb{F}^n$, their \emph{tensor product} is the $k$-dimensional hypermatrix given by
\[
x \otimes y \otimes \dots \otimes z \coloneqq [ x_{i_1} y_{i_2} \cdots z_{i_k} ]_{i_1, i_2, \dots, i_k =1}^n \in (\mathbb{F}^n)^{\otimes k}.
\]
We write
\[
 x^{\otimes k} \coloneqq \underbrace{x \otimes \dots \otimes x}_{k \text{ copies}}.
\]
If $\langle \cdot, \cdot \rangle$ is an inner product on $\mathbb{F}^n$, then defining
\begin{equation}\label{eq:ip1}
\langle x \otimes y \otimes \dots \otimes z, x' \otimes y' \otimes \dots \otimes z'\rangle \coloneqq \langle x,x'\rangle \langle y, y'\rangle \cdots \langle z, z'\rangle
\end{equation}
and extending bilinearly (if $\mathbb{F} = \mathbb{R}$) or sesquilinearly (if $\mathbb{F} =\mathbb{C}$) to all of $(\mathbb{F}^n)^{\otimes k} $ yields an inner product on the $k$-dimensional hypermatrices. In particular we have
\[
\langle x^{\otimes k}, y^{\otimes k} \rangle = \langle x, y\rangle^{k}.
\]
If $\{e_1, \dots, e_n \}$ is the standard orthonormal basis of  $\mathbb{F}^n$, then
\begin{equation}\label{eq:basis1}
 \bigl\{e_{i_1} \otimes \dots \otimes e_{i_k} \in (\mathbb{F}^{n})^{\otimes k} : i_1,\dots, i_k \in \{1,\dots,n\}\bigr\}
\end{equation}
is an orthonormal basis of $(\mathbb{F}^{n})^{\otimes k}$. For more information about hypermatrices see \cite{Lim} and for a more formal definition of tensor products see \cite{FA18}.

If an  $\mathbb{F}$-vector space $\mathcal{H}$  is equipped with an inner product $\langle \cdot, \cdot \rangle$ such that every Cauchy sequence in $\mathcal{H}$ converges with respect to the induced norm $\lVert v\rVert =\lvert \langle v, v\rangle \rvert^{1/2}$,  we call  $\mathcal{H}$ a \emph{Hilbert space}. Hilbert spaces need not be finite-dimensional; we call $\mathcal{H}$ \emph{separable} if there is a countable set of orthonormal vectors $\{e_j \in  \mathcal{H} : j \in J\}$, i.e., $J$ is a countable index set,  such that  every $v \in \mathcal{H}$ satisfies
\begin{equation}\label{eq:parseval}
\lVert v \rVert^2 = \sum_{j \in J} \lvert \langle v, e_j \rangle \rvert^2.
\end{equation}

Let $\langle \cdot, \cdot \rangle_k$ be the inner product on $(\mathbb{F}^n)^{\otimes(2k+1)}$ as defined in \eqref{eq:ip1}, $\lVert \, \cdot\, \rVert_k$ be its induced norm, and $B_k$ be the orthonormal basis in \eqref{eq:basis1}. Let $n \in \mathbb{N}$. The $\mathbb{F}$-vector space\footnote{The direct sum in \eqref{eq:Hn} is a Hilbert space direct sum, i.e., it is the closure of the vector space direct sum.}
\begin{equation}\label{eq:Hn}
\mathcal{H}_n(\mathbb{F}) \coloneqq \bigoplus_{k=0}^{\infty}(\mathbb{F}^n)^{\otimes(2k+1)}
= \Bigl\{(v_0, v_1, v_2, \dots) :  v_k \in (\mathbb{F}^n)^{\otimes(2k+1)}, \;  \sum\nolimits_{k=0}^\infty \lVert v_k \rVert_k^2 < \infty\Bigr\}
\end{equation}
equipped with the inner product
\begin{equation}\label{eq:ip2}
\langle u, v\rangle_* \coloneqq \sum_{k=0}^\infty \langle u_k, v_k \rangle_k
\end{equation}
is a separable Hilbert space since $\bigcup_{k=0}^{\infty} B_k$ is a countable set of orthonormal vectors satisfying  \eqref{eq:parseval}. We write  $\lVert \, \cdot \, \rVert_*$ for the norm induced by \eqref{eq:ip2}.

\begin{theorem}[Grothendieck inequality with Krivine and Haagerup bounds]\label{2-thm2}
Let $\mathbb{F} = \mathbb{R}$ or $\mathbb{C}$ and $l,m,n\in\mathbb{N}$. For any $M\in\mathbb{F}^{m\times n}$, any $x_1,\dots,x_m, y_1,\dots,y_n\in\mathbb{F}^{l}$ of unit $2$-norm, we have
\begin{equation}\label{eq:GKH}
\max_{\lVert x_i\rVert = \lVert y_j \rVert = 1}\biggl| \sum_{i=1}^m\sum_{j=1}^n M_{ij} \langle x_{i}, y_{j}\rangle\biggr|\leq K^\mathbb{F} \max_{\lvert \varepsilon_i \rvert = \lvert \delta_j \rvert = 1}\biggl|\sum_{i=1}^m\sum_{j=1}^n M_{ij} \varepsilon_i \delta_j\biggr|,
\end{equation}
where
\[
K^\mathbb{R} \coloneqq\frac \pi {2\log(1+\sqrt{2})} \qquad\text{and}\qquad
 K^\mathbb{C} \coloneqq \frac 8 {\pi(x_0+1)}
\]
are Krivine's and Haagerup's bounds respectively. Recall that $x_0$  is as defined in \eqref{2-eq7}.
\end{theorem}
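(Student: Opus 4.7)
The approach is Krivine's trick: apply Corollary~\ref{2-cor1} not to the given vectors $x_i, y_j \in \mathbb{F}^l$ directly but to carefully chosen unit vectors $u_i, v_j$ in the Hilbert space $\mathcal{H}_l(\mathbb{F})$, arranged so that the nonlinear function ($\arcsin$ in the real case, $H$ in the complex case) unravels to become a scalar multiple of $\langle x_i, y_j \rangle$. By Lemma~\ref{lem:maxepsdelta} and homogeneity, I may assume the right side of \eqref{eq:GKH} equals $1$, and by restricting to the finite-dimensional subspace spanned by $u_1,\dots,u_m,v_1,\dots,v_n$ I may invoke Corollary~\ref{2-cor1} unchanged.

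\emph{Real case.}  Set $c := \log(1+\sqrt{2}) = \operatorname{arcsinh}(1)$, so that $\sinh(c)=1$ and $c<\pi/2$.  Let $a_k := \sqrt{c^{2k+1}/(2k+1)!}$ and define
\[
u_i := \bigl(a_0 x_i,\, a_1 x_i^{\otimes 3},\, a_2 x_i^{\otimes 5},\dots\bigr), \qquad
v_j := \bigl(a_0 y_j,\, -a_1 y_j^{\otimes 3},\, a_2 y_j^{\otimes 5},\dots\bigr).
\]
Using $\langle x_i^{\otimes(2k+1)}, y_j^{\otimes(2k+1)}\rangle_k = \langle x_i, y_j\rangle^{2k+1}$ together with $\|x_i\|=\|y_j\|=1$, I get $\|u_i\|_*^2 = \|v_j\|_*^2 = \sum_k a_k^2 = \sinh(c) = 1$ and, matching the Taylor series of $\sin$,
\[
\langle u_i, v_j\rangle_* = \sum_{k=0}^\infty \frac{(-1)^k c^{2k+1}}{(2k+1)!}\langle x_i, y_j\rangle^{2k+1} = \sin\bigl(c\langle x_i, y_j\rangle\bigr).
\]
Since $|c\langle x_i, y_j\rangle|\le c < \pi/2$, we have $\arcsin\langle u_i,v_j\rangle_* = c\langle x_i,y_j\rangle$; plugging into Corollary~\ref{2-cor1} and dividing by $c$ yields the Krivine bound $\pi/(2c)$.

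\emph{Complex case.}  Let $c_0 := \pi(x_0+1)/8 \in (0,1)$, $s_k := \operatorname{sgn}(b_{2k+1})$, and $a_k := \sqrt{|b_{2k+1}|\,c_0^{2k+1}}$.  In the $k$-th summand $(\mathbb{C}^l)^{\otimes(2k+1)}$ of $\mathcal{H}_l(\mathbb{C})$, place
\[
(u_i)_k := a_k\, x_i^{\otimes(k+1)} \otimes \bar x_i^{\otimes k}, \qquad
(v_j)_k := s_k a_k\, y_j^{\otimes(k+1)} \otimes \bar y_j^{\otimes k}.
\]
Under the sesquilinear convention one has $\langle \bar x_i, \bar y_j\rangle = \overline{\langle x_i, y_j\rangle} = \langle y_j, x_i\rangle$, so
\[
\langle (u_i)_k, (v_j)_k\rangle_k = s_k a_k^2 \langle x_i, y_j\rangle^{k+1}\langle y_j, x_i\rangle^{k} = b_{2k+1}\,c_0^{2k+1}\,\langle x_i, y_j\rangle\,|\langle x_i, y_j\rangle|^{2k}.
\]
Summing and invoking the expansion \eqref{2-eq8} gives $\langle u_i, v_j\rangle_* = H^{-1}\bigl(c_0\langle x_i, y_j\rangle\bigr)$, while $\|u_i\|_*^2 = \|v_j\|_*^2 = \sum_k |b_{2k+1}|\,c_0^{2k+1} = \varphi(c_0) = 1$ by the very definition of $c_0$.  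Since $c_0|\langle x_i, y_j\rangle| < 1$, applying $H$ recovers $c_0\langle x_i, y_j\rangle$; Corollary~\ref{2-cor1} followed by division by $c_0$ yields the Haagerup bound $1/c_0 = 8/(\pi(x_0+1))$.

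I expect the main obstacle to be the bookkeeping in the complex construction: one has to recognize that the correct blocks inside $(\mathbb{C}^l)^{\otimes(2k+1)}$ are the mixed tensors $x^{\otimes(k+1)} \otimes \bar x^{\otimes k}$ rather than $x^{\otimes(2k+1)}$, because sesquilinearity forces $\langle u_i, v_j\rangle_*$ to be a series in both $\langle x_i, y_j\rangle$ and $\overline{\langle x_i, y_j\rangle}$, which is precisely the form of $H^{-1}$ in \eqref{2-eq8}.  The unit-norm identity $\sum_k |b_{2k+1}|\,c_0^{2k+1} = 1$ that makes the construction work is exactly \eqref{2-eq7}, and relies on the nonpositivity \eqref{eq:nonpos} established in Section~\ref{sec:haa}; everything else is essentially an exercise in the Taylor expansions of $\sin$ and $H^{-1}$.
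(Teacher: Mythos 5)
Your proof is correct and follows essentially the same route as the paper: embed $x_i,y_j$ via the Taylor-coefficient-weighted tensor maps into $\mathcal{H}_l(\mathbb{F})$ so that the inner product becomes $\sin(c\langle x_i,y_j\rangle)$ (resp.\ $H^{-1}(c_0\langle x_i,y_j\rangle)$), check the images are unit vectors using $\sinh(c)=1$ (resp.\ $\varphi(c_0)=1$), and apply Corollary~\ref{2-cor1} after restricting to the finite-dimensional span. The only differences from the paper's proof are cosmetic (the alternating signs are placed on the $v_j$'s rather than the $u_i$'s, and the conjugated tensor factors appear last rather than first); your explicit remarks that $c<\pi/2$ and $c_0<1$ justify inverting $\sin$ and $H$ and are a useful clarification.
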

\begin{proof}
As we described at the end of Section~\ref{sec:gauss}, we will `linearize' the nonlinear functions $\arcsin$ and $H$ in Corollary~\ref{2-cor1} by using Taylor series to replace these functions by polynomials, followed by a `tensor trick' to express polynomials as linear functions on an infinite-dimensional space.

\medskip

\noindent \textsc{Case I}: $\mathbb{F} = \mathbb{R}$.\; Let $c \coloneqq \text{arcsinh}(1)=\log(1+\sqrt{2})$. Taylor expansion gives
\begin{equation}
\sin(c\langle x_{i}, y_{j}\rangle)=\sum_{k=0}^{\infty}(-1)^{k}\frac{c^{2k+1}}{(2k+1)!}\langle x_{i}, y_{j}\rangle^{2k+1}
=\sum_{k=0}^{\infty}(-1)^{k}\frac{c^{2k+1}}{(2k+1)!}\bigl\langle x_{i}^{\otimes(2k+1)}, y_{j}^{\otimes(2k+1)}\bigr\rangle_k. \label{eq:tensor}
\end{equation}

For any $l \in \mathbb{N}$, let $\mathcal{H}_l(\mathbb{R})$ be as in \eqref{eq:Hn}, and $S,T:\mathbb{R}^l\to \mathcal{H}_l(\mathbb{R})$ be nonlinear maps defined by 
\[
\begin{aligned}
S(x)&\coloneqq \bigl(S_k(x)\bigr)_{k=0}^{\infty},\quad &S_k(x) &=(-1)^{k}\sqrt{\frac{c^{2k+1}}{(2k+1)!}}\cdot x^{\otimes(2k+1)},\\
T(x)&\coloneqq \bigl(T_k(x)\bigr)_{k=0}^{\infty},\quad &T_k(x) &=\sqrt{\frac{c^{2k+1}}{(2k+1)!}}\cdot x^{\otimes(2k+1)},
\end{aligned}
\]
for any $x\in\mathbb{R}^l$.
To justify that $S$ and $T$ are indeed maps into $\mathcal{H}_l(\mathbb{R})$, we need to demonstrate that $\|S(x)\|_*,\|T(x)\|_* <\infty$ but this follows from
\begin{gather*}
\|S(x)\|_*^2=\sum_{k=0}^{\infty}\|S_k(x)\|_k^2=\sum_{k=0}^\infty\frac{c^{2k+1}}{(2k+1)!}\|x\|^{2(2k+1)}=\sum_{k=0}^{\infty}\|T_k(x)\|_k^2 = \|T(x)\|_*^2
\shortintertext{and}
\sum_{k=0}^\infty\frac{c^{2k+1}}{(2k+1)!}\|x\|^{2(2k+1)} = \sinh(c\|x\|^2) < \infty
\end{gather*}
for all $x\in \mathbb{R}^l$.
Note that 
\[
\langle{S(x),T(y)\rangle_*=\sum_{k=0}^{\infty}(-1)^k\frac{c^{2k+1}}{(2k+1)!}}\langle x,y\rangle ^{2k+1}=\sin (c\langle x,y\rangle).
\]
Hence \eqref{eq:tensor} becomes:
\[
\sin(c\langle x_{i}, y_{j}\rangle)=\langle S(x_{i}), T(y_{j})\rangle_* \qquad\text{or}\qquad c\langle x_{i}, y_{j}\rangle=\arcsin\langle S(x_{i}), T(y_{j})\rangle_*.
\]
Moreover, since $x_{i}$ and $y_{j}$ are unit vectors in $\mathbb{R}^l$, we get
\[
\|S(x_{i})\|^{2}=\sinh(c\|x_{i}\|^{2})=1\qquad\text{and}\qquad \|T(y_{j})\|^{2}=\sinh(c\|y_{j}\|^{2})=1.
\]
As the $m+n$ vectors $S(x_1),\dots,S(x_m), T(y_1),\dots,T(y_n)$  in $\mathcal{H}_l(\mathbb{R})$ span a subspace $\mathcal{S} \subseteq \mathcal{H}_l(\mathbb{R})$ of dimension $d\le m+n$; and since any two finite-dimensional inner product spaces are isometric,  $\mathcal{S}$ is isometric to $\mathbb{R}^d$ with the standard inner product.
So we may apply Corollary~\ref{2-cor1} to obtain
\[
\biggl|\sum_{i=1}^m\sum_{j=1}^n M_{ij}\langle x_{i}, y_{j}\rangle\biggr|
= \frac{1}{c}  \biggl|\sum_{i=1}^m\sum_{j=1}^n M_{ij}\arcsin\langle S(x_{i}), T(y_{j})\rangle_*\biggr|
\leq \frac{\pi}{2c},
\]
which is Krivine's bound since $ \pi / 2c = \pi/ \bigl(2\log(1+\sqrt{2})\bigr) = K^\mathbb{R}$.

\medskip

\noindent\textsc{Case II}: $\mathbb{F} = \mathbb{C}$.\; Let $c_0 \in (0,1)$ be the unique constant defined in \eqref{2-eq7} such that $\varphi(c_0)=1$. By the Taylor expansion in \eqref{2-eq8} and noting that $\sign(z) \lvert z\rvert^{2k+1} = \overline{z}^{k} z^{k+1}$,
\begin{align}
H^{-1}(c_0\langle x_{i}, y_{j}\rangle)
&=\sign(c_0\langle x_{i}, y_{j}\rangle)\sum_{k=0}^{\infty}b_{2k+1}|c_0\langle x_{i}, y_{j}\rangle|^{2k+1} \nonumber \\
&= \sum_{k=0}^{\infty}b_{2k+1}c_0^{2k+1}  \overline{\langle x_{i}, y_{j}\rangle}^k \langle x_{i}, y_{j}\rangle^{k+1}\nonumber \\
&= \sum_{k=0}^{\infty}b_{2k+1} c_0^{2k+1}  \langle \overline{x}_{i}, \overline{y}_{j}\rangle^k \langle x_{i}, y_{j}\rangle^{k+1} \nonumber \\
&= \sum_{k=0}^{\infty}b_{2k+1} c_0^{2k+1} \bigl\langle \overline{x}_{i}^{\otimes k} \otimes x_{i}^{\otimes (k+1)}, \overline{y}_{j}^{\otimes k} \otimes  y_{j}^{\otimes (k+1)}\bigr\rangle_{k}.
 \label{eq:tensor2}   
\end{align}

For any $l \in \mathbb{N}$, let $D_l=\{x \in \mathbb{C}^l :\|x\|\le 1\}$ be the unit ball, let $\mathcal{H}_l(\mathbb{C})$ be as in \eqref{eq:Hn}, and let $S, T:D_l \to \mathcal{H}_l(\mathbb{C})$ be nonlinear maps defined by 
\[
\begin{aligned}
S(x)&= \bigl(S_k(x)\bigr)_{k=0}^{\infty},\quad &S_k(x) &\coloneqq  \sign(b_{2k+1})\bigl(|b_{2k+1}|c_0^{2k+1}\bigr)^{1/2} \cdot \bar x^{\otimes(k)}\otimes x^{\otimes(k+1)} ,\\
T(x)&= \bigl(T_k(x)\bigr)_{k=0}^{\infty},\quad &T_k(x) &\coloneqq \bigl(|b_{2k+1}|c_0^{2k+1}\bigr)^{1/2}\cdot \bar x^{\otimes(k)}\otimes x^{\otimes(k+1)},
\end{aligned}
\]
for any $x \in D_l$.
Then $S$ and $T$ are maps into $\mathcal{H}_l(\mathbb{C})$ since
\[
\|S(x)\|_*^2=\sum_{k=0}^{\infty}\|S_k(x)\|_k^2=\sum_{k=0}^\infty \lvert b_{2k+1} \rvert c_0^{2k+1}\|x\|^{2(2k+1)}=\sum_{k=0}^{\infty}\|T_k(x)\|_k^2 = \|T(x)\|_*^2
\]
and, as $b_1 > 0$ and $b_{2k+1} \le 0$ for all $k \ge 1$ by \eqref{eq:nonpos},
\[
\sum_{k=0}^\infty \lvert b_{2k+1} \rvert c_0^{2k+1}\|x\|^{2(2k+1)} 
=  2b_1  c_0 \|x\|^{2} - H^{-1}(c_0\|x\|^2) < \infty.
\]
As in Case I, we may rewrite \eqref{eq:tensor2} as
\[
H^{-1}(c_0\langle x_{i}, y_{j}\rangle)=\langle S(x_{i}), T(y_{j})\rangle_* \qquad\text{or}\qquad c_0\langle x_{i}, y_{j}\rangle=H(\langle S(x_{i}), T(y_{j})\rangle_*).
\]
Moreover, since $x_{i}$ and $y_{j}$ are unit vectors in $\mathbb{C}^l$, we get
\[
\|S(x_{i})\|^2=\sum_{k=0}^{\infty}|b_{2k+1}|c_0^{2k+1}=\varphi(c_0)=1,
\]
and similarly $\|T(y_j)\|=1$.
So we may apply Corollary~\ref{2-cor1} to get
\[
\biggl|\sum_{i=1}^m\sum_{j=1}^n M_{ij}\langle x_{i}, y_{j}\rangle\biggr|
=\frac 1{c_0}\biggl|\sum_{i=1}^m\sum_{j=1}^n M_{ij}H(\langle S(x_{i}), T(y_{j})\rangle_*)\biggr|
\leq\frac 1{c_0},
\]
which is Haagerup's bound since $1/c_0 = 8/\pi(x_0+1) = K^\mathbb{C}$.
\end{proof}

\section{Nonpositivity of $b_{2k+1}$}\label{non-positivity}

To make the proof in this article entirely self-contained, we present Haagerup's proof of the nonpositivity of $b_{2k+1}$ that we used earlier in \eqref{eq:nonpos}. While the main ideas are all due to Haagerup, our small contribution here is that we avoided the use of any known results of elliptic integrals  in order to stay faithful to our claim of an elementary proof, i.e., one that uses only calculus and basic complex variables.
To be clear, while the functions
\begin{equation}\label{eq:EK}
K(x)\coloneqq \int_0^{ \pi/ 2}(1-x^2\sin^2 t )^{-1 /2}\, dt, \qquad   E(x) \coloneqq \int_0^{ \pi / 2}(1-x^2\sin^2 t)^{ 1/ 2} \, dt
\end{equation}
do make a brief appearance in the proof of Lemma~\ref{basineqlem}, the reader does not need to know that they are  the complete elliptic integrals of the first and second kinds respectively.  Haagerup  had relied liberally on  properties of $K$ and $E$ that require substantial effort to establish  \cite{Haagerup}. We will only use trivialities that follow immediately  from definitions.

Our point of departure from Haagerup's proof is the following lemma about two functions $h_1$ and $h_2$, which we will  see in Lemma~\ref{extension} arise respectively from the real and imaginary parts of the analytic extension of the real Haagerup function $h:[-1,1]\to[-1,1]$  to the upper half plane.
\begin{lemma}\label{basineqlem}  
Let $h_1, h_2 : [1, \infty) \to \mathbb{R}$ be defined by
\begin{align*}
h_1(x)&\coloneqq\int_0^{\pi/2} \sqrt{1-x^{-2}\sin^2 t}\, dt,\\
h_2(x)&\coloneqq (1-x^{-2})\int_0^{\pi/2} \frac{\sin^2 t}{\sqrt{1-(1-x^{-2})\sin^2 t}} \, dt,
\end{align*} 
which are clearly strictly increasing functions on $[1,\infty)$ with
\[
h_1(1)= 1, \qquad \lim_{x\to \infty} h_1(x)=\pi/2, \qquad h_2(1)=0, \qquad \lim_{x\to \infty} h_2(x)=\infty.
\]
Then
\begin{alignat}{2}
\omega_1(x)\coloneqq x(h_1(x)h_2'(x) -h_1'(x)h_2(x)) &=\frac{\pi}{2}  &\qquad&\text{for}\;  x\ge 1,\label{thetincr}\\
\omega_2(x) \coloneqq x(h_1(x)h_1' (x)+h_2(x) h_2'(x))&\ge 2h_1(\sqrt{2})h_2(\sqrt{2}) >\frac{\pi}{4} &&\text{for}\;  1 \le x \le \sqrt{2}. \label{posomeg}
\end{alignat}
\end{lemma}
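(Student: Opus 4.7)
The boundary values are elementary: $h_1(1)=\int_0^{\pi/2}\cos t\,dt=1$ using $\sqrt{1-\sin^2 t}=\cos t$; $\lim_{x\to\infty}h_1(x)=\pi/2$ by dominated convergence; $h_2(1)=0$ from the prefactor $(1-x^{-2})$; and $\lim_{x\to\infty}h_2(x)=\infty$ by monotone convergence, since the integrand increases pointwise to the non-integrable $\sin^2 t/\cos t$. Strict monotonicity follows by differentiating under the integral and noting that the resulting integrands are manifestly positive.

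For the identity \eqref{thetincr}, my plan is to show $\omega_1'(x)\equiv 0$ and pin down the resulting constant by a boundary evaluation. Differentiating under the integral gives
\[
h_1'(x)=x^{-3}\int_0^{\pi/2}\frac{\sin^2 t}{\sqrt{1-x^{-2}\sin^2 t}}\,dt
\]
and an analogous two-term formula for $h_2'(x)$ obtained by differentiating both the prefactor and the integrand. Writing $\omega_1(x)$ as a double integral over $[0,\pi/2]^2$ and differentiating once more in $x$, a single integration by parts in the inner variable (a self-contained elementary mirror of the classical Legendre relation) yields $\omega_1'(x)=0$. To fix the constant I take $x\to 1^+$: a direct computation gives $h_2'(1)=\pi/2$ since the second piece of $h_2'$ carries a vanishing factor $(1-x^{-2})$. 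Meanwhile $h_2(x)=O(x-1)$ while $h_1'(x)=O(\log(x-1))$ diverges only logarithmically, so $h_1'(x)h_2(x)\to 0$, yielding $\omega_1(1^+)=1\cdot(\pi/2)-0=\pi/2$.

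For the inequality \eqref{posomeg}, my plan has three steps. First, verify the identity $\omega_2(\sqrt{2})=2 h_1(\sqrt{2})h_2(\sqrt{2})$, exploiting the self-dual symmetry $x^{-2}=1-x^{-2}=\tfrac12$ at $x=\sqrt 2$; an integration by parts applied to $\frac{d}{dt}\bigl[\sin t\cos t\,(1-\tfrac12\sin^2 t)^{-1/2}\bigr]$ supplies the required algebraic relation among the integrals in $h_1,h_2,h_1',h_2'$ at $\sqrt 2$. Second, show that $\omega_2$ attains its minimum on $[1,\sqrt 2]$ at $x=\sqrt{2}$. Using $h_1(x)=E(1/x)$ and $h_2(x)=K(k')-E(k')$ with $k'=\sqrt{1-x^{-2}}$ (with $E,K$ as in \eqref{eq:EK}), write $\omega_2(x)=f(k)+f(k')$ where $f(k):=E(k)(K(k)-E(k))$ and $k=1/x$. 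The involution $\phi(x):=1/\sqrt{1-x^{-2}}$ swaps $k\leftrightarrow k'$, leaves $\omega_2$ invariant, and has $\sqrt{2}$ as its unique fixed point with $\phi'(\sqrt 2)=-1$, forcing $\omega_2'(\sqrt{2})=0$. Combined with $\omega_2(x)\to\infty$ as $x\to 1^+$ (driven by $h_1 h_1'$) and as $x\to\infty$ (driven by $h_2$), it suffices to show that $\sqrt{2}$ is the only critical point. Third, bound $2h_1(\sqrt{2})h_2(\sqrt{2})>\pi/4$ by elementary two-sided estimates on the defining integrals.

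The main obstacle is the uniqueness of the critical point of $\omega_2$ in the second step. After using the elementary formulas $E'(k)=(E(k)-K(k))/k$ and $K'(k)=(E(k)-(1-k^2)K(k))/(k(1-k^2))$, both established by differentiating under the integral, the critical-point condition reduces to $k'f'(k)=k f'(k')$, and uniqueness amounts to showing that $k\mapsto f'(k)/k$ is strictly monotonic on $(0,1)$, equivalently that $u\mapsto f(\sqrt u)$ is strictly convex on $(0,1)$. This derivative-sign analysis of a composite elliptic expression is the technical core of the section and the one place where the paper's elementary-proof promise is most sorely tested.
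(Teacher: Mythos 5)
Your treatment of the boundary values, monotonicity, and the identity \eqref{thetincr} is essentially sound and parallels the paper's. The paper establishes $\omega_1'\equiv 0$ by rewriting everything in terms of $E$ and $K$ from \eqref{eq:EK}, computing their derivatives $E'(y)=(E(y)-K(y))/y$ and $K'(y)=(E(y)-(1-y^2)K(y))/(y(1-y^2))$ by differentiating under the integral, and then observing that $\omega_1(x)=E(1/x)K(y)+K(1/x)E(y)-K(1/x)K(y)$ with $y=\sqrt{1-x^{-2}}$ has vanishing derivative; your proposed ``double integral plus integration by parts'' is a plausible alternative packaging of the same Legendre-relation computation, and your determination of the constant via $h_1'(x)h_2(x)\to 0$ (logarithmic times linear) is correct and matches the paper's use of the bound $h_1'(x)\leq\pi/(4x^2\sqrt{x^2-1})$.

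For \eqref{posomeg}, however, your plan has a genuine gap at exactly the spot you flag. You correctly reduce the problem to the strict convexity of $g(u):=E(\sqrt{u})\bigl(K(\sqrt{u})-E(\sqrt{u})\bigr)$ on $(0,1)$ (equivalently, strict monotonicity of $k\mapsto f'(k)/k$), but you do not prove it, calling it ``the technical core'' and leaving it open. The paper closes this exact hole by a short calculation: using the derivative formulas for $E$ and $K$ above, one finds
\[
g''(u)=\frac{1}{2}\biggl[\frac{E(\sqrt{u})}{1-u}-\frac{K(\sqrt{u})-E(\sqrt{u})}{u}\biggr]^2\ge 0,
\]
a perfect square, so convexity is immediate and no sign analysis of a composite elliptic expression is required. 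Once $g$ is convex, the paper avoids your critical-point/boundary-behavior scaffolding entirely: writing $\omega_2(x)=g(x^{-2})+g(1-x^{-2})$ and setting $f(u)=g(u)+g(1-u)$, convexity plus the symmetry $f(u)=f(1-u)$ force $f'(1/2)=0$ and hence $f(u)\ge f(1/2)=2g(1/2)=2h_1(\sqrt 2)h_2(\sqrt 2)$ directly, with no need to discuss uniqueness of critical points or the blow-up of $\omega_2$ as $x\to 1^+$ or $x\to\infty$. Your step 1 (the identity $\omega_2(\sqrt 2)=2h_1(\sqrt 2)h_2(\sqrt 2)$) is correct but also unnecessary under the paper's framing, since it falls out of $f(1/2)=2g(1/2)$. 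Finally, the numeric inequality $2h_1(\sqrt 2)h_2(\sqrt 2)>\pi/4$ is handled in the paper by tabulated values; your ``two-sided elementary estimates'' would need to be made explicit, but that is a minor matter compared with the missing convexity proof.
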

\begin{proof}
We start by observing some properties of $h_1'$ and $h_2'$. As
\[
h_1' (x)=\frac{1}{x^3}\int_0^{\pi/2}\frac{\sin^2 t}{\sqrt{1-x^{-2}\sin^2 t}}\, dt
=\frac{1}{x^2}\int_0^{\pi/2}\frac{\sin^2 t}{\sqrt{x^2-\sin^2 t}}\, dt,
\]
$h_1'$ is strictly decreasing on $(1,\infty)$. As $\int_0^{\pi/2} \cos^{-1} t \, dt=\infty$, $\lim_{x \to 1^+} h_1'(x)=\infty$. Clearly $\lim_{x\to \infty}h_1'(x)=0$.  Furthermore, when $x > 1$, since $\sqrt{x^2-\sin^2 t}\ge \sqrt{x^2-1}$, we have
\begin{equation}\label{eq:g1'bd}
0<h_1'(x)\le \frac{\pi}{4x^2\sqrt{x^2-1}} \textrm{ for } x>1.
\end{equation}
It is straightforward to see that the functions $E$ and $K$ in \eqref{eq:EK} have derivatives given by
\begin{equation}\label{elliptder}
  E'(y)=\frac{1}{y}\bigl(E(y)-K(y)\bigr), \qquad K'(y)=\frac{1}{y(1-y^2)}\bigl(E(y)-(1-y^2)K(y))\bigr).
\end{equation}
Clearly, $h_2(x)=K(y)-E(y)$, where $y=y(x)=\sqrt{1-x^{-2}}$. So by chain rule,
\[
h_2'(x)=y'(x)\frac{d}{dy}(K-E)(y(x)) =\frac{1}{x}\int_0^{\pi/2}(1-(1-x^{-2})\sin^2 t)^{1/2}\, dt.
\]
Hence $h_2'$ is strictly decreasing on $[1,\infty)$, $h_2'(1)=\pi/2$, and $\lim_{x\to \infty} h_2'(x)=0$.

To show \eqref{thetincr}, observe that 
\[
h_1(x)=E(1/x),\quad xh_1'(x)=K(1/x)-E(1/x),\quad h_2(x)=K(y)-E(y),\quad xh_2'(x)=E(y),
\]
where again $y=\sqrt{1-x^{-2}}$. Hence
\begin{align*}
\omega_1(x)&=E(1/x)E(y)-[K(1/x)-E(1/x)][K(y)-E(y)]\\
&=E(1/x)K(y)+K(1/x)E(y)-K(1/x)K(y).
\end{align*}
Computing $\omega_1'$, we see from \eqref{elliptder} that $\omega_1' \equiv 0$. So $\omega_1$ is  a constant function. By \eqref{eq:g1'bd}, $\lim_{x\searrow 1}h_1'(x)(1-x^{-2})=0$, and so $\lim_{x\searrow 1}\omega_1(x)=\pi/2$. Thus $\omega_1(x)=\pi/2$ for all $x > 1$ and we may set $\omega_1(1)=\pi/2$.

We now show \eqref{posomeg}  following  Haagerup's  arguments.
Note that
\[
\omega_2(x)=E(1/x)(K(1/x)-E(1/x))+E(y)(K(y)-E(y)).
\]
Let $g(x) \coloneqq E(\sqrt{x})(K(\sqrt{x})-E(\sqrt{x}))$.  A straightforward calculation using \eqref{elliptder} shows that
\[
g''(x)= \frac{1}{2}\biggl[ \frac{E(\sqrt{x})}{1-x}-\frac{K(\sqrt{x})-E(\sqrt{x})}{x}\biggr]^2\ge 0, \quad x\in[0,1].\]
So $g$ is convex on $[0,1]$.  Hence $g(1-x)$ is also convex on $[0,1]$.  Let $f(x) \coloneqq g(x)+g(1-x)$. Then $f$ is convex on $[0,1]$ and $f'(1/2)=0$.  Therefore $f(x)\ge f(1/2)\ge 2g(1/2)$.  This yields the first inequality in \eqref{posomeg}: $\omega_2(x)\ge 2h_1(\sqrt{2})h_2(\sqrt{2})$ for $x\in[1,\sqrt{2}]$.  

The Taylor expansions of $h_1$ and $h_2$ may be obtained as that in \eqref{h(x)},
\begin{align}
h_1(x)&=\frac \pi 2 \sum_{k=0}^\infty\biggl[\frac{(2k)!}{2^{2k}(k!)^2}\biggr]^2\frac{1}{1-2k}x^{-2k}, \label{eq:sf5-1}\\
h_2(x&)=\frac \pi 2 \sum_{k=0}^\infty\biggl[\frac{(2k)!}{2^{2k}(k!)^2}\biggr]^2\frac{2k}{2k-1}(1-x^{-2})^k. \label{eq:sf5-2}
\end{align}
Approximate numerical values of $h_1$ and $h_2$ at $x = \sqrt{2}$ and $4$ are calculated\footnote{For example, using  \url{http://www.wolframalpha.com}, which is freely available. Such numerical calculations cannot be completely avoided --- Haagerup's proof implicitly contains them as he used tabulated values of elliptic integrals.} to be:
\begin{equation}\label{eq:numval}
h_1(\sqrt{2}) \approx 1.3506438, \quad h_2(\sqrt{2}) \approx 0.5034307, \quad h_1(4) \approx 1.5459572, \quad h_2(4) \approx 1.7289033.
\end{equation}
The second inequality in \eqref{posomeg} then follows from
$2h_1(\sqrt{2})h_2(\sqrt{2}) \approx 2 \times 1.35064 \times 0.50343 >\pi/4$.
\end{proof}

In the next two lemmas and their proofs, $\Arg$ will denote principal argument.
\begin{lemma}\label{extension} Let $h:[-1,1]\to [-1,1]$ be the real Haagerup function as defined in Section~\ref{sec:haa}.  Then $h$  can be extended to a function $h_+: \overline{\mathbb{H}} \to \mathbb{C}$ that is continuous on the closed upper half-plane $\overline{\mathbb{H}}=\{z\in\mathbb{C} : \Im(z)\geq 0\}$ and analytic on the upper half-plane $\mathbb{H}=\{z\in\mathbb{C} : \Im(z)>0 \}$. In addition, $h_+$ has the following properties:
\begin{enumerate}[\upshape (i)]
  \item\label{extension-2} $\Im (h_+(z))\geq \Im(h_+(|z|))$ for all $z\in\overline{\mathbb{H}}\cap\{z\in\mathbb{C}: |z|\geq1 \}$ and $h_+(z)\ne 0$ for all  $z\in\overline{\mathbb{H}}\backslash \{0\}$. 
  \item \label{extension-3} For $x\in[1,\infty)$,
\[
\Re(h_+(x))= h_1(x), \qquad \Im(h_+(x))= h_2(x),
\]
where $h_1, h_2$ are as defined in Lemma~\ref{basineqlem}.

\item\label{extension-4} For all $k\in\mathbb{N}$ and all real $\alpha>1$,
\begin{equation}\label{eq:bkint}
b_{2k+1}=\frac{2}{\pi (2k+1)}\int_1^\alpha \Im\bigl( (h_+(x))^{-(2k+1)}\bigr)\,dx + r_k(\alpha)
\end{equation}
where 
\begin{equation}\label{eq:bkrem}
|r_k(\alpha)|\leq \frac{\alpha}{2k+1}\bigl(\Im (h_+(\alpha))\bigr)^{-(2k+1)}.
\end{equation}
\end{enumerate}
\end{lemma}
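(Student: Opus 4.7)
The plan is to extend $h$ by the obvious complex version of its defining integral and then verify the three claims in turn. For $z \in \overline{\mathbb{H}}$ I would set
\[
h_+(z) := z\int_0^{\pi/2}\frac{\cos^2 t}{\sqrt{1-z^2\sin^2 t}}\,dt,
\]
using the principal branch of $\sqrt{\cdot}$. For $z \in \mathbb{H}$ and $t \in (0,\pi/2]$, the quantity $1-z^2\sin^2 t$ stays off the branch cut $(-\infty,0]$ (that would force $z \in \mathbb{R}$), so the integrand is analytic in $z$; differentiation under the integral sign plus Morera yield analyticity on $\mathbb{H}$, and dominated convergence gives continuity up to $\overline{\mathbb{H}}$ despite the integrable $(t_0-t)^{-1/2}$ singularity at $t_0 := \arcsin(1/|x|)$ when $|x|>1$. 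For $z \in [-1,1]$ the integrand is positive real and $h_+$ reduces to the original $h$.

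For part (ii), I would take $z = x + i\varepsilon$ with $x > 1$ and let $\varepsilon \to 0^+$. Since $\Im(1-z^2\sin^2 t) = -2x\varepsilon\sin^2 t < 0$ for $t>0$, the principal square root tends to $+\sqrt{1-x^2\sin^2 t}$ when $t<t_0$ and to $-i\sqrt{x^2\sin^2 t-1}$ when $t>t_0$. The substitution $\sin u = x\sin t$ converts the real piece into $h_1(x)$, while $x\sin t = (1-(1-x^{-2})\sin^2 s)^{-1/2}$ followed by an integration by parts converts the imaginary piece into $h_2(x)$. The nonvanishing half of (i) then splits into three cases: for $z\in(-1,1)\setminus\{0\}$ we have $h_+ = h \ne 0$; for $z\in\mathbb{R}$ with $|z|>1$, part (ii) gives $\Im(h_+(z)) = \pm h_2(|z|) \ne 0$; for $z \in \mathbb{H}$, tracking $\arg(1-z^2\sin^2 t)$ as $\theta = \arg z$ varies in $(0,\pi)$ shows that the integrand $z\cos^2 t/\sqrt{1-z^2\sin^2 t}$ has argument trapped in $(0,\pi)$, so the integral itself has strictly positive imaginary part.

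The main obstacle is the inequality $\Im(h_+(z))\ge h_2(|z|)$ for $z\in\overline{\mathbb{H}}$ with $|z|\ge 1$. My approach combines three ingredients. First, the symmetry $h_+(re^{i\theta}) = -\overline{h_+(re^{i(\pi-\theta)})}$ (a consequence of the oddness of $h$ and Schwarz reflection across $(-1,1)$) makes $F(\theta) := \Im(h_+(re^{i\theta}))$ symmetric about $\pi/2$, so it suffices to handle $\theta\in[0,\pi/2]$. Second, the same quadrant analysis used for nonvanishing shows $\Im(h_+(e^{i\theta}))\ge 0$ on $[0,\pi]$; since $h_2(1) = 0$, the inequality holds on the unit semicircle. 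Third, the leading-order asymptotic $h_+(z) \sim i\log|z|$ as $|z|\to\infty$ in $\overline{\mathbb{H}}$ (obtained by isolating the $t \lesssim 1/|z|$ regime in the defining integral) forces $F(\theta) - h_2(r)$ to stay bounded below at infinity. Then the minimum principle applied to the harmonic function $\Im(h_+) - U$ on $\{|z|>1,\ \Im z>0\}$, where $U$ is the bounded harmonic function with boundary values $h_2(|z|)$ on the real axis and $0$ on the unit semicircle, gives $\Im(h_+)\ge U$; a separate verification that $U(z)\ge h_2(|z|)$, which I plan to derive either by showing $h_2(|z|)$ is subharmonic on $\{|z|>1\}$ (using $\omega_1\equiv\pi/2$ from \eqref{thetincr}) or by direct analysis of $U$, then completes (i).

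For part (iii), I would start from $b_{2k+1} = (2\pi i)^{-1}\oint_{|z|=r}h^{-1}(z)z^{-(2k+2)}\,dz$ for small $r$, substitute $z = h(w)$, and integrate by parts to rewrite this as
\[
b_{2k+1} = \frac{1}{2\pi i (2k+1)}\oint_{\gamma_0}\frac{dw}{h(w)^{2k+1}},
\]
for a small loop $\gamma_0$ around $0$. Schwarz reflection extends $h$ analytically to $\Omega := \mathbb{C}\setminus((-\infty,-1]\cup[1,\infty))$, and by part (i) the integrand has a unique pole (at $0$) in $\Omega$. Deforming $\gamma_0$ to the boundary of $\{|w|\le\alpha\}\cap\Omega$ (the circle $|w|=\alpha$ plus dog-bones around $[1,\alpha]$ and $[-\alpha,-1]$), the jumps $h_-(x) = \overline{h_+(x)}$ across $[1,\alpha]$ and the corresponding oddness-based jumps across $[-\alpha,-1]$ produce two equal slit contributions totalling $-4i\int_1^\alpha\Im(h_+(x)^{-(2k+1)})\,dx$, giving the integral term of \eqref{eq:bkint} after dividing by $2\pi i(2k+1)$. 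The remainder $r_k(\alpha) = [2\pi i (2k+1)]^{-1}\oint_{|w|=\alpha}h(w)^{-(2k+1)}\,dw$ is bounded by $\frac{\alpha}{2k+1}\cdot\min_{|w|=\alpha}|h(w)|^{-(2k+1)}$, and combining $|h(w)|\ge\Im(h_+(w))$ on $\overline{\mathbb{H}}$ with part (i) yields \eqref{eq:bkrem}.
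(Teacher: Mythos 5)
Your parts (ii) and (iii) are sound and close in spirit to the paper's, but part (i) contains a genuine error that the paper's different representation avoids. The paper first integrates by parts to get $h(x)=\int_0^{\pi/2}\sin t\,\arcsin(x\sin t)\,dt$ and extends $\arcsin$ to a continuous/analytic $\arcsin_+$ on $\overline{\mathbb{H}}$. The key payoff is the explicit formula $\Im(\arcsin_+ z)=\arccosh\bigl(\tfrac12(|z-1|+|z+1|)\bigr)\ge\arccosh|z|$ for $|z|\ge1$, which gives a \emph{pointwise} inequality inside the integral and yields $\Im(h_+(z))\ge\Im(h_+(|z|))$ in one line, with no potential theory at all. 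Your plan instead hinges on $h_2(|z|)$ being subharmonic on $\{|z|>1\}$, and this is false: for a radial function, subharmonicity is equivalent to $r\mapsto r\,h_2'(r)$ being nondecreasing, and the paper's own computation gives $x\,h_2'(x)=\int_0^{\pi/2}\bigl(1-(1-x^{-2})\sin^2 t\bigr)^{1/2}\,dt=E\bigl(\sqrt{1-x^{-2}}\bigr)$, which is strictly \emph{decreasing} in $x$ because $E$ is decreasing and $\sqrt{1-x^{-2}}$ is increasing. So $h_2(|z|)$ is strictly superharmonic; the comparison with the harmonic function $U$ then runs the wrong way ($U\le h_2(|z|)$, not $\ge$), and the chain $\Im(h_+)\ge U\ge h_2(|z|)$ collapses. (The identity $\omega_1\equiv\pi/2$ controls the Wronskian $h_1h_2'-h_1'h_2$, not $(xh_2')'$, so it cannot rescue this.) In addition, ``the bounded harmonic function with boundary values $h_2(|z|)$'' is ill-posed since that boundary data is unbounded, and the Phragm\'en--Lindel\"of step at infinity would need explicit justification; none of these obstacles arises in the paper's $\arccosh$ argument.

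For the record, your part (ii) computation (splitting at $t_0=\arcsin(1/x)$ into the real and imaginary pieces, then substituting) is essentially what the paper does after its initial integration by parts, and your part (iii) is a legitimate variant: the paper deforms a small circle to a \emph{quarter}-circle contour in the first quadrant and uses that $h_+$ is real on $[\delta,1]$ and purely imaginary on $i[\delta,\alpha]$ to kill two boundary pieces, whereas you reflect across $(-1,1)$ and deform in $\Omega=\mathbb{C}\setminus\bigl((-\infty,-1]\cup[1,\infty)\bigr)$ using a dog-bone around both slits together with $h_-(x)=\overline{h_+(x)}$ and oddness. Both give \eqref{eq:bkint} and the same remainder estimate via $|h_+(w)|\ge\Im(h_+(w))\ge\Im(h_+(\alpha))$ on $|w|=\alpha$; the paper's quadrant trick just avoids having to track the second slit.
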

\begin{proof}
Integrating by parts, we obtain
\[
h(x)=\int_0^{\pi / 2}\cos t \cdot  d(\arcsin (x\sin t)) =\int_0^{\pi/2}\sin t\arcsin (x\sin t)\, dt, \qquad x\in[-1,1].
\]
The analytic function $\sin z$ is a bijection of $[-\pi/2, \pi/2]\times [0,\infty)$ onto $\overline{\mathbb{H}}$ and it maps the line segment $\{t+ia: -\pi/2\leq t\leq \pi/2\}$ onto the half ellipsoid $\{z\in\overline{\mathbb{H}}: |z-1|+|z+1|=2\cosh a   \}$.
Let $\arcsin_+$ be the inverse of this mapping. Then $\arcsin_+$ is continuous in $\overline{\mathbb{H}}$ and analytic in $\mathbb{H}$. In addition, we have:
\begin{align*}
\arcsin_+ x&=
\begin{cases}\arcsin x & \text{if}\;  x\in[-1,1],  \\
\frac \pi 2 \sign x +i\arccosh |x| & \text{if}\; x\in (-\infty,-1)\cup (1,\infty),
\end{cases}\\
\Im(\arcsin_+ z)&=\arccosh\Bigl(\frac 1 2 (|z-1|+|z+1|)\Bigr), \qquad z\in\overline{\mathbb{H}}.
\end{align*}
If we define
\[
h_+(z) \coloneqq \int_0^{\pi/2} \sin t \arcsin_+(z\sin t) \, dt,  \qquad   z\in\overline{\mathbb{H}},
\]
then $h_+$ is a continuous extension of $h$ to $\overline{\mathbb{H}}$ and is analytic in $\mathbb{H}$.

\begin{enumerate}[\upshape (i)]
\item Since $\arccosh$ is increasing on $[1,\infty)$, we have 
\[
\Im(\arcsin_+z)=\arccosh\Bigl(\frac 1 2 (|z-1|+|z+1|)\Bigr)\geq
\begin{cases}
\arccosh |z| & \text{if}\; |z|\geq 1, \\
0 & \text{if}\; |z|<1.
\end{cases}
\]
Therefore for $z\in\overline{\mathbb{H}}\cap\{z\in\mathbb{C}: |z|\geq1 \}$, 
\begin{align*}
\Im(h_+(z)) &= \int_0^{\pi/2} \sin t\cdot \Im(\arcsin_+(z\sin t))\, dt \\
&\geq \int_{\arcsin(1/|z|)}^{\pi/2} \sin t \arccosh (|z|\sin t) \, dt = \Im(h_+(|z|)).
\end{align*}
As $\Im(\arcsin_+ z)>0$ on $\mathbb{H}$, we have $\Im(h_+(z))>0$ on $\mathbb{H}$. For $x\in[-1,1]$, $h_+(x)=h(x)$ is zero only at $x=0$. For $x\in(-\infty,-1)\cup(1,\infty)$,
\[
\Im(h_+(x))=\int_{\arcsin(1/|x|)}^{\pi/2} \sin t \arccosh(|x|\sin t)\, dt>0.
\] 
Hence $h_+$ has no zero in $\overline{\mathbb{H}}\backslash \{0\}$.

\item Let $x\in(1,\infty)$. Integrating by parts followed by a change-of-variables $\sin u=x\sin t$ in the next-to-last equality gives us:
\begin{align*}
 \Re(h_+(x)) &= \int_0^{\arcsin(1/x)}\sin t \arcsin(x\sin t) \, dt +\frac{\pi}{2} \int_{\arcsin(1/x)}^{\pi/2} \sin t \, dt \\
&= x\int_0^{\arcsin(1/x)} \frac{\cos^2 t} {\sqrt{1-x^2\sin^2 t}} \, dt 
= \int_0^{\pi/2}\sqrt{1-x^{-2}\sin^2 u}\, du  
= h_1(x).
\end{align*}
A change-of-variables $\sin v=(1-x^{-2})^{-1/2}\cos t$ in the next-to-last equality gives us:
\begin{align*}
 \Im(h_+(x)) &=\int_{\arcsin(1/x)}^{\pi/2}\sin t \arccosh(x\sin t)  \, dt
= x\int_{\arcsin(1/x)}^{\pi/2} \frac{\cos^2 t}{\sqrt{x^2\sin^2 t-1}}  \, dt \\
&= (1-x^{-2})\int_0^{\pi/2} \frac{\sin^2 v}{\sqrt{1-(1-x^{-2})\sin^2 v}} \, dv =h_2(x).
\end{align*}

\item The power series \eqref{h(x)} shows that $h$ defines an analytic function $h(z)$ in the  open unit disk that is identically equal to $h_+(z)$ on $\{z\in\mathbb{C}: |z|<1\}\cap \overline{\mathbb{H}}$. Since $h(0)=0$ and $h'(0)\neq 0$, we can find some $\delta_0\in(0,1]$ such that $h(z)$ has an analytic inverse function \eqref{inverse h} in $\{z\in\mathbb{C}:|z|<\delta_0 \}$. For $0<\delta<\delta_0$, let $C_\delta$ be a counterclockwise orientated circle with radius $\delta$. It follows that $h(C_\delta)$ is a simple closed curve with winding number $+1$. Integrating by parts with a change-of-variables, we have 
\[
b_{2k+1} =\frac{1}{2\pi i}\int_{h(C_\delta)}\frac{h^{-1}(z)}{z^{2k+2}}\, dz 
= \frac{1}{2\pi i}\int_{C_\delta} \frac{z}{h(z)^{2k+2}}h'(z) \, dz.
\]
Note that $b_{2k+1}\in \mathbb{R}$ and 
\[
-(2k+1)\int_{C_\delta} \frac{z h'(z)}{h(z)^{2k+2}} \, dz+ \int_{C_\delta}\frac{1}{h(z)^{2k+1}}\, dz= \int_{C_\delta} \frac{d}{dz}\biggl[ \frac{z}{h(z)^{2k+1}}\biggr] \, dz=0.
\]
Then we get 
\begin{align*}
b_{2k+1}&=\frac{1}{2\pi (2k+1)}\int_{C_\delta} h(z)^{-(2k+1)}\,dz 
=\frac{1}{2\pi (2k+1)}\int_{C_\delta} \Im(h(z)^{-(2k+1)})\,dz\\
&= \frac{2}{\pi (2k+1)}\int_{C_\delta'} \Im(h(z)^{-(2k+1)})\,dz
\end{align*}
where $C_\delta' $ is the quarter circle $\{ \delta e^{i\theta}: 0\leq \theta \leq \pi/2  \}$. Since $h(z)$ identically equals $h_+(z)$ on $C_\delta'$ and $h_+(z)$ has no zeros in the set $\{z\in\mathbb{C}: \delta\leq |z|\leq \alpha,\; 0\leq \Arg z\leq \pi/2 \}$ by \eqref{extension-2},  Cauchy's integral formula yields
\[
b_{2k+1}= \frac{2}{\pi (2k+1)}\Im\biggl[ \int_\delta^\alpha h_+(z)^{-(2k+1)}\,dz + \int_{C_\alpha'} h_+(z)^{-(2k+1)}\,dz  + \int_{i\alpha}^{i\delta} h_+(z)^{-(2k+1)}\,dz   \biggr].
\]
Moreover, since $h_+(z)$ is real on $[\delta, 1]$ and its real part vanishes on the imaginary axis, we are left with
\[
b_{2k+1}= \frac{2}{\pi (2k+1)} \int_1^\alpha \Im( h_+(z)^{-(2k+1)}) \,dz+ \frac{2}{\pi (2k+1)}\Im \biggl[ \int_{C_\alpha'} h_+(z)^{-(2k+1)}\,dz \biggr].
\]
By \eqref{extension-2}, $h_+(z)\geq  \Im(h_+(z))\geq  \Im(h_+(|z|))$. Thus 
\[
\biggl| \int_{C_\alpha'} h_+(z)^{-(2k+1)}\,dz  \biggr|\leq \frac{\pi\alpha}{2}\bigl(\Im(h_+(\alpha))\bigr)^{-(2k+1)}.  \qedhere
\]
\end{enumerate}
\end{proof}

The integral expression of $b_{2k+1}$ in \eqref{eq:bkint} will be an important ingredient in the proof that $b_{2k+1}\leq 0$ for $k\geq 1$. We establish some further approximations for this integral in the next and final lemma. 
\begin{lemma}\label{approximations}
Let $\alpha=4$ throughout.\footnote{To avoid confusion, we write `$\alpha$' for the upper limit of our integrals instead of `$4$' as the same number will also appear in an unrelated context `$k \ge 4$.'}
Let $\theta(x) \coloneqq \Arg(h_+(x))$ for $x\in[1,\infty)$. Then $\theta :[1,\infty) \to [0, 2\pi]$ is strictly increasing on for $x\ge 1$, $\theta(1)=0$, and $\lim_{x\rightarrow\infty}\theta(x)=\pi/2$. In addition, we have the following:
\begin{enumerate}[\upshape (i)]
\item\label{approx-2} Let  $p\coloneqq \lfloor (2k+1)\theta(\alpha)/\pi\rfloor$. Let
\begin{align*}
I_r &\coloneqq \frac{2}{\pi(2k+1)}\int_{\theta(x)=\pi(r-1)/(2k+1)}^{\theta(x)=\pi r/(2k+1)} |h_+(x)|^{-(2k+1)}|\sin( (2k+1)\theta(x))|\, dx
\intertext{for $r = 1,2,\dots,p$, and}
J &\coloneqq \frac{2}{\pi(2k+1)}\int_{\theta(x)=\pi p/(2k+1)}^{\alpha} |h_+(x)|^{-(2k+1)}|\sin((2k+1)\theta(x))| \, dx.
\end{align*}
Then 
\[
\frac {2} {\pi (2k+1)}\int_1^\alpha \Im(h_+(x)^{-(2k+1)}) \, dx = -I_1 + I_2- \ldots +(-1)^p I_p + (-1)^{p+1}J.
\]
\item\label{approx-3} Let $k\geq 4$. Then $p\geq 2$ and $I_1>I_2>\dots>I_p>J$.
\item\label{approx-4} Let $k\geq 4$  and $c=|h_+(\sqrt{2})|e^{-\theta(\sqrt{2})/2}$. Then $I_1>0.57c^{-(2k+1)}/(2k+1)^2$ and $I_2<0.85 I_1$.
\end{enumerate}
\end{lemma}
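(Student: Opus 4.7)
The plan is to reduce every part to the substitution $u = \theta(x)$, which carries $[1,\alpha]$ bijectively onto $[0,\theta(\alpha)]$ and whose Jacobian $\theta'(x) = \pi/(2x|h_+(x)|^2)$ is immediate from $\tan\theta = h_2/h_1$ together with $\omega_1 \equiv \pi/2$ in Lemma~\ref{basineqlem}. The identity $\omega_1 \equiv \pi/2$ also gives strict monotonicity of $\theta$, while the limits $\theta(1)=0$ and $\lim_{x\to\infty}\theta(x)=\pi/2$ follow from $h_+(1)=1$ together with $h_1\to\pi/2$ and $h_2\to\infty$. For part (i) I would write $h_+(x)^{-(2k+1)} = |h_+(x)|^{-(2k+1)} e^{-i(2k+1)\theta(x)}$, so $\Im\bigl(h_+^{-(2k+1)}\bigr) = -|h_+|^{-(2k+1)}\sin((2k+1)\theta)$, and then partition $[1,\alpha]$ into the $p+1$ sub-intervals on which $\theta$ traverses $[(r-1)\pi/(2k+1),r\pi/(2k+1)]$ for $r=1,\dots,p$ plus the tail $[p\pi/(2k+1),\theta(\alpha)]$. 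On the $r$-th piece the sign of $\sin((2k+1)\theta)$ is $(-1)^{r-1}$, so the contribution is $(-1)^r I_r$ and the tail contributes $(-1)^{p+1}J$, giving the claimed formula.

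For (ii), applying $u = \theta(x)$ to each $I_r$ and using $dx = (2x|h_+|^2/\pi)\,du$ recasts the integral as $\frac{2}{\pi(2k+1)}\int_{(r-1)\pi/(2k+1)}^{r\pi/(2k+1)} F(x(u))|\sin((2k+1)u)|\,du$, where $F(x) \coloneqq (2x/\pi)|h_+(x)|^{-(2k-1)}$. After shifting each piece to a common $\tau$-interval $[0,\pi/(2k+1)]$, the chain $I_1 > I_2 > \dots > I_p > J$ reduces to strict monotonic decrease of $F$ on $[1,\alpha]$, since at the same $\tau$ the relevant $x$-value is larger on later intervals. A direct differentiation gives $F'(x) = (2/\pi)|h_+|^{-(2k+1)}\bigl[|h_+|^2 - (2k-1)\omega_2(x)\bigr]$, so I would reduce the monotonicity claim to $|h_+(x)|^2 < (2k-1)\omega_2(x)$ on $[1,4]$ for $k\geq 4$. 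On $[1,\sqrt{2}]$ this follows from $\omega_2 > \pi/4$ of Lemma~\ref{basineqlem} together with $|h_+|^2 \leq h_1(\sqrt{2})^2+h_2(\sqrt{2})^2 \approx 2.08$ via \eqref{eq:numval}, since $7\pi/4 > 2.08$; on $[\sqrt{2},4]$ the same bound is a direct numerical check using the Taylor expansions \eqref{eq:sf5-1}--\eqref{eq:sf5-2}. The inequality $p\geq 2$ for $k\geq 4$ reduces to $\theta(4) \geq 2\pi/9$, and $\theta(4) = \arctan(h_2(4)/h_1(4)) \approx 0.841 > 2\pi/9$ from \eqref{eq:numval}.

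For (iii), the key observation is that $\pi/(2k+1) \leq \pi/9 < \theta(\sqrt{2})$ for $k\geq 4$ (again from \eqref{eq:numval}), so the range of integration in $u$ for $I_1$ sits entirely inside $[0,\theta(\sqrt{2})]$, equivalently $x(u) \in [1,\sqrt{2}]$, and $\omega_2 > \pi/4$ holds throughout by Lemma~\ref{basineqlem}. After the substitution $v = (2k+1)u$ and the identity $d(\log|h_+(x(u))|)/du = 2\omega_2/\pi$, the growth bound $2\omega_2/\pi \ge 1/2$ integrates to produce the exponential factor $e^{-(2k+1)\theta(\sqrt{2})/2}$ which, together with $|h_+(\sqrt{2})|^{-(2k+1)}$, is exactly $c^{-(2k+1)}$. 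Collecting this with $|h_+(\sqrt{2})|^2 \approx 2.08$ and $\int_0^\pi \sin v\,dv = 2$ then yields the constant $0.57$. For $I_2 < 0.85\, I_1$ I would compare the integrands of $I_1$ and $I_2$ at matched $\tau$ using the explicit $F'$-formula from (ii) to quantify how much $F$ decays over the jump of $\pi/(2k+1)$ in $u$; the constant $0.85$ is then read off from the numerical values at $\sqrt{2}$.

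The main obstacle is the quantitative part (iii): the sharp constants $0.57$ and $0.85$ cannot be obtained from a single one-line inequality but require chaining several bounds that are individually close to tight. In particular, the crude replacement $|h_+(x(u))| \leq |h_+(\sqrt{2})|$ on $[0,\pi/(2k+1)]$ is off by roughly a factor of $e^{\theta(\sqrt{2})/2}$ at $k=4$, so one really has to track how $|h_+|$ grows with $u$ via $d\log|h_+|/du = 2\omega_2/\pi$ rather than bounding it by its endpoint value. A secondary, smaller difficulty lies in the $[\sqrt{2},4]$ portion of (ii), where $|h_+|^2 < 7\omega_2$ does not follow directly from Lemma~\ref{basineqlem} and so requires a genuine numerical verification along the interval using \eqref{eq:sf5-1}--\eqref{eq:sf5-2}.
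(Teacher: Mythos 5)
Your overall plan — substitute $u=\theta(x)$ using $\theta'(x)=\pi/(2x|h_+(x)|^2)$ from $\omega_1\equiv\pi/2$, partition $[1,\alpha]$ by the sign of $\sin((2k+1)\theta)$ for (i), reduce (ii) to monotonicity of $F(x)=(2x/\pi)|h_+(x)|^{-(2k-1)}$, and reduce (iii) to the growth estimate $\tfrac{d}{d\theta}\log|h_+(x(\theta))|=\omega_2/\omega_1>1/2$ — is exactly the paper's route, and parts (i) and the $\theta$-monotonicity are handled correctly. Two points are worth flagging.

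First, in (ii) you flag the interval $[\sqrt 2,4]$ as a ``secondary difficulty'' requiring a fresh numerical verification because Lemma~\ref{basineqlem} states $\omega_2>\pi/4$ only on $[1,\sqrt 2]$. That is a correct reading of the lemma \emph{statement}, but the lemma's \emph{proof} already gives $\omega_2(x)=f(x^{-2})\geq f(1/2)=2g(1/2)>\pi/4$ by convexity of $f$ on all of $[0,1]$, hence on all of $[1,\infty)$. The paper's proof of \eqref{approx-3} uses this stronger fact directly (writing $|h_+(x)|^2-(2k-1)\tfrac{\pi}{4}\le|h_+(4)|^2-\tfrac{7\pi}{4}\approx-0.12<0$ on $[1,4]$), so no new numerical work on $[\sqrt 2,4]$ is needed; the gap is only in the stated hypothesis of Lemma~\ref{basineqlem}, not in the argument.

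Second, and more substantively, the arithmetic you describe for $I_1>0.57c^{-(2k+1)}/(2k+1)^2$ is internally inconsistent. The bound $|h_+(x(\theta))|\le ce^{\theta/2}$ (obtained by integrating $\tfrac{d}{d\theta}\log|h_+|>1/2$ down from $\theta(\sqrt2)$) yields $|h_+(x(\theta))|^{-(2k-1)}\ge c^{-(2k-1)}e^{-(2k-1)\theta/2}$, and the $\theta$-dependent weight $e^{-(2k-1)\theta/2}$ stays inside the integral; after the substitution $v=(2k+1)\theta$ and the bound $(2k-1)/(2(2k+1))\le1/2$ one is left with $\int_0^\pi e^{-v/2}\sin v\,dv=\tfrac{4}{5}(1+e^{-\pi/2})\approx0.966$, not $\int_0^\pi\sin v\,dv=2$. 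Pulling out $c^2$ from $c^{-(2k-1)}=c^2c^{-(2k+1)}$ and collecting gives $\bigl(\tfrac{2c}{\pi}\bigr)^2\cdot\tfrac{4(1+e^{-\pi/2})}{5}\approx0.57$. Your own ``main obstacle'' paragraph correctly notes that the crude endpoint bound $|h_+|\le|h_+(\sqrt2)|$ does not suffice, which contradicts your earlier invocation of $\int_0^\pi\sin v\,dv=2$; you need the exponentially weighted integral. Finally, for $I_2<0.85I_1$ your plan (``compare integrands at matched $\tau$ via $F'$'') is too vague to check: the paper instead shifts $I_2$ to the same $\theta$-window as $I_1$, bounds $x(\theta+\pi/(2k+1))\le5/\sqrt3$ using $\theta(5/\sqrt3)>2\pi/9$, and applies the same growth rate to get $|h_+(x(\theta+\pi/(2k+1)))|^{-(2k-1)}\le e^{-(k-1/2)\pi/(2k+1)}|h_+(x(\theta))|^{-(2k-1)}$, yielding $I_2\le(5/\sqrt3)e^{-7\pi/18}I_1<0.85\,I_1$ for $k\ge4$; the $x$-factor and the $|h_+|$-factor must be controlled separately and asymmetrically ($x\ge1$ in $I_1$, $x\le5/\sqrt3$ in $I_2$), which your $F$-based sketch does not address.
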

\begin{proof}
Since $\theta(x) = \arctan (h_2(x)/h_1(x))$, by \eqref{thetincr},  we get 
\begin{equation}\label{theta_derivative}
\frac{d\theta(x)}{dx}=\frac{h_1(x)h_2'(x)-h_1'(x)h_2(x)}{|h_+(x)|^2}>0, \qquad x>1.
\end{equation}
So $\theta(x)$ is strictly increasing on for $x\ge 1$. It is clear that $\theta(1)=0$. By Lemma~\ref{basineqlem},
$\lim_{x\rightarrow \infty} h_1(x)= \pi /2$ and $\lim_{x\rightarrow \infty} h_2(x)=+\infty$, so $\lim_{x\rightarrow\infty}\theta(x)=\pi/2$.

\begin{enumerate}[\upshape (i)]
\item This follows from dividing the interval of the integral $[1,\alpha]$ into $p+1$ subsets:
\begin{align*}
\frac {2} {\pi (2k+1)}\int_1^\alpha \Im(h_+(x)^{-(2k+1)}) \, dx  &= -\frac{2}{\pi(2k+1)}\int_1^\alpha |h_+(x)|^{-(2k+1)}\sin( (2k+1)\theta(x))\, dx \\
&= -I_1 + I_2- \ldots +(-1)^p I_p + (-1)^{p+1}J.
\end{align*}

\item We write $x=x(\theta)$, $\theta\in[0,\pi/2)$, for the inverse function of $\theta=\theta(x)$. By \eqref{theta_derivative}, we have 
\begin{align*}
I_r&=\frac{4}{\pi^2(2k+1)}\int_{\pi(r-1)/(2k+1)}^{\pi r/(2k+1)} x(\theta) |h_+(x(\theta))|^{-2k+1}|\sin ((2k+1)\theta)| \, d\theta, \\
J&=\frac{4}{\pi^2(2k+1)}\int_{\pi p/(2k+1)}^{\theta(\alpha)} x(\theta) |h_+(x(\theta))|^{-2k+1}|\sin ((2k+1)\theta)| \, d\theta.
\end{align*}

By Lemma~\ref{basineqlem}, $h_1(x)$ and $h_2(x)$ are strictly increasing function of $x\in [1,\infty)$, therefore, so is $|h_+(x)|^2=h_1(x)^2 + h_2(x)^2$.  With this, we deduce that $x|h_+(x)|^{-2k+1}$ is strictly decreasing on $[1,\alpha]$ for $k\geq 4$ as
\begin{align*}
\frac{d}{dx}(x|h_+(x)|^{-2k+1}) &=|h_+(x)|^{-2k+1}+\frac{(-2k+1)x}{2}|h_+(x)|^{-2k-1}\frac{d}{dx}|h_+(x)|^2 \\
&=|h_+(x)|^{-2k-1}\bigl(|h_+(x)|^2-(2k-1)x(h_1(x)h_1'(x)+h_2(x)h_2'(x))\bigr) \\
&\leq |h_+(x)|^{-2k-1}\Bigl(|h_+(x)|^2-(2k-1)\frac{\pi}{4} \Bigr) \\
&\leq |h_+(x)|^{-2k-1}\Bigl(|h_+(\alpha)|^2-\frac{7\pi}{4} \Bigr) \approx -0.1187<0,
\end{align*}
where we have used the fact that $|h_+(x)|^2$ is increasing on $[1,\alpha]$ in the next-to-last inequality and the numerical value is calculated from those of $h_1(4)$ and $h_2(4)$ in \eqref{eq:numval}. Since $|\sin((2k+1)\theta)|$ is periodic with period $\pi/(2k+1)$, we obtain $I_1>I_2>\ldots>I_p$. In addition, 
\begin{align*}
J &=\frac{4}{\pi^2(2k+1)}\int_{\pi p/(2k+1)}^{\theta(\alpha)} x(\theta) |h_+(x(\theta))|^{-2k+1}|\sin ((2k+1)\theta)| \, d\theta \\
&\leq \frac{4}{\pi^2(2k+1)}\int_{(p-1)\pi/(2k+1)}^{\theta(\alpha)-\pi/(2k+1)} x(\theta) |h_+(x(\theta))|^{-2k+1}|\sin ((2k+1)\theta)| \, d\theta < I_p.
\end{align*}
Finally, we have $\theta(\alpha)=\arctan(h_1(\alpha)/h_2(\alpha))\approx 0.8412>\pi/4 =\arctan(1)$, and so $p=\lfloor (2k+1)\theta(\alpha)/\pi\rfloor \geq \lfloor 9\theta(\alpha)/\pi\rfloor =2$ for $k\geq 4$.

\item Since $x(\theta)\geq 1$ for $\theta\in[0,\pi/2)$, we have 
\[
I_1\geq \frac{4}{\pi^2(2k+1)}\int_0^{\pi /(2k+1)} x(\theta) |h_+(x(\theta))|^{-2k+1}|\sin ((2k+1)\theta)| \, d\theta.
\]
Recall that $\theta=\theta(x)$ and $x=x(\theta)$ are inverse functions of one another. For $\theta\in[0,\theta(\sqrt{2})]$,
\begin{align*}
\frac{d}{d\theta}\log|h_+(x(\theta))|  &=\frac{1}{2} \frac{d}{dx}\log|h_+(x)|^2\cdot \Bigl(\frac{d\theta}{dx}  \Bigr)^{-1}  \\
&= \frac{h_1(x)h_1'(x)+h_2(x)h_2^\prime(x)}{ h_1(x)h_2^\prime(x)-h_1^\prime(x)h_2(x)}= \frac{\omega_2(x)}{\omega_1(x)}>\frac 1 2,
\end{align*}
for $x\in [1,\sqrt{2}]$, where we have used \eqref{thetincr}, \eqref{posomeg}, and the fact that $\theta(x)$ is strictly increasing for $x\ge 1$.
Hence $\log|h_+(x(\theta))|\leq \log|h_+(\sqrt{2})|-(\theta(\sqrt{2})-\theta)/2$ which is equivalent to 
\[
|h_+(x(\theta))|\leq ce^{\theta/2}, \qquad \theta\in[0,\theta(\sqrt{2})]
\]
where $c=|h_+(\sqrt{2})|e^{-\theta(\sqrt{2})/2}\approx 1.2059$ and $\theta(\sqrt{2})>\pi/9$, using values of  $h_1(\sqrt{2})$ and $h_2(\sqrt{2})$ in \eqref{eq:numval}. 

It follows that for $k\geq 4$, we have 
\begin{align*}
I_1 &\geq \frac{4}{\pi^2(2k+1)}\int_0^{\pi /(2k+1)} (ce^{\theta/2})^{-2k+1}\sin ((2k+1)\theta) \, d\theta  \\
&= \frac{4c^{-2k+1}}{\pi^2(2k+1)^2}\int_0^{\pi} e^{-(k-1/2)\theta/(2k+1)}\sin \theta \, d\theta 
\geq  \frac{4c^{-2k+1}}{\pi^2(2k+1)^2}\int_0^{\pi} e^{-\theta/2}\sin \theta \, d\theta \\
&= \Bigl(\frac{2c}{\pi} \Bigr)^2\frac{1+e^{-\pi/2}}{1+1/4}\cdot \frac{c^{-(2k+1)}}{(2k+1)^2} > \frac{0.57c^{-(2k+1)}}{(2k+1)^2}.
\end{align*}
Since $\frac{d}{d\theta}\log|h_+(x(\theta))|\geq 1/2$, we get 
\[
|h_+(x(\theta+\pi/(2k+1)))|^{-2k+1}\leq e^{-(k-1/2)\pi/(2k+1)}|h_+(x(\theta))|^{-2k+1}
\]
Moreover, since $\theta(5/\sqrt{3})>2\pi/9$, we know that $x(\theta)\leq 5/\sqrt{3}$ on $[0,2\pi/9]$. Hence for $k\geq 4$, it follows from the above results that
\begin{align*}
I_r &=\frac{4}{\pi^2(2k+1)}\frac{5}{\sqrt{3}}\int_{\pi/(2k+1)}^{2\pi/(2k+1)} x(\theta) |h_+(x(\theta))|^{-2k+1}|\sin ((2k+1)\theta)| \, d\theta  \\
&=\frac{4}{\pi^2(2k+1)}\frac{5}{\sqrt{3}}\int_0^{\pi/(2k+1)} x(\theta) |h_+(x(\theta+\pi/(2k+1)))|^{-2k+1}\sin ((2k+1)\theta) \, d\theta   \\
&\leq \frac{4}{\pi^2(2k+1)}\frac{5}{\sqrt{3}}e^{-(k-1/2)\pi /(2k+1)}\int_0^{\pi/(2k+1)} x(\theta) |h_+(x(\theta))|^{-2k+1}\sin ((2k+1)\theta) \, d\theta   \\
&\leq \frac{5}{\sqrt{3}}e^{-7\pi /18} I_1 < 0.85I_1. \qedhere
\end{align*}
\end{enumerate}
\end{proof}

The fact that $x|h_+(x)|^{-2k+1}$ is strictly decreasing on $[1,4]$ for $k\geq 4$, established in the proof of \eqref{approx-3} above, is a crucial observation for establishing the nonpositivity of $b_{2k+1}$ for $k\ge 4$.  Observe that since $|h_+(x)|$ is strictly increasing for $x>1$, it is enough to show that $x|h_+(x)|^{-7}$ is strictly decreasing on $[1,4]$, which is what we did.  Note that for a fixed $k\ge 1$, $x|h_+(x)|^{-2k+1}$ is \emph{increasing} for large enough $x$, as $|h_+(x)|$ behaves like $C\log x$ for $x \gg 1$.

\begin{theorem}\label{thm:nonpos}
Let the Taylor expansion of $h^{-1}(x)$ be as in \eqref{inverse h}.  Then $b_{2k+1}\leq 0$ for $k\geq1$. 
\end{theorem}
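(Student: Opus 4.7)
The plan is to prove $b_{2k+1}\le 0$ for $k\ge 1$ by combining the integral representation of Lemma~\ref{extension}(iii) with the alternating-series decomposition and sharp estimates of Lemma~\ref{approximations}, disposing of small $k$ by direct computation.

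For $k\in\{1,2,3\}$ the explicit values $b_3=-\tfrac{1}{8}(4/\pi)^3$, $b_5=0$, $b_7=-\tfrac{1}{1024}(4/\pi)^7$ recorded after~\eqref{inverse h} are all non-positive, so those cases are immediate. For $k\ge 4$, I would take $\alpha=4$ in Lemma~\ref{extension}(iii) to write $b_{2k+1}=S+r_k(4)$ with
\[
S\coloneqq \frac{2}{\pi(2k+1)}\int_1^4\Im\bigl(h_+(x)^{-(2k+1)}\bigr)\,dx
\]
and $|r_k(4)|\le \tfrac{4}{2k+1}\,h_2(4)^{-(2k+1)}$ (using $\Im h_+(4)=h_2(4)$ from Lemma~\ref{extension}(ii)). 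Lemma~\ref{approximations}(i)(ii) then decomposes $S$ as $-I_1+I_2-I_3+\dots+(-1)^p I_p+(-1)^{p+1}J$ with $I_1>I_2>\dots>I_p>J>0$ and $p\ge 2$.

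The key inequality is the alternating-series bound $S\le -(I_1-I_2)$, obtained by grouping the remaining tail into non-negative consecutive differences $(I_3-I_4)+(I_5-I_6)+\dots$ (plus an unpaired positive summand at the end if one remains). Inserting the estimates $I_2<0.85\,I_1$ and $I_1>0.57\,c^{-(2k+1)}/(2k+1)^2$ from Lemma~\ref{approximations}(iii), where $c=|h_+(\sqrt{2})|e^{-\theta(\sqrt{2})/2}$, yields $S<-0.0855\,c^{-(2k+1)}/(2k+1)^2$. Since $h_2(4)>c$ numerically (roughly $1.73$ versus $1.21$), the ratio $|r_k(4)|/|S|$ is bounded by a constant multiple of $(2k+1)(c/h_2(4))^{2k+1}$ and so decays exponentially in $k$. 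Thus $b_{2k+1}<0$ for all $k$ beyond some explicit threshold $k_0$.

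The main obstacle I foresee is that the base of this exponential decay, $c/h_2(4)\approx 0.70$, is not particularly small, so $k_0$ may exceed $4$ and leave a short intermediate range of $k$ to dispose of. Closing this gap requires either refining the bounds in Lemma~\ref{approximations} — for instance by choosing $\alpha$ as a function of $k$ to minimize $|r_k(\alpha)|/(I_1-I_2)$, or by sharpening the ratio $I_2/I_1$ beyond $0.85$ — or else a direct numerical verification of the finitely many residual coefficients via the Lagrange inversion formula cited before~\eqref{eq:nonpos}. The structural idea is clean (an alternating-series estimate on a contour-integral representation); the delicate part is the quantitative bookkeeping needed to ensure the remainder really stays dominated all the way down to $k=4$.
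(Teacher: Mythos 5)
Your proposal reproduces the paper's argument: take $\alpha=4$, decompose the integral into the alternating sum of Lemma~\ref{approximations}, bound $-b_{2k+1} > I_1 - I_2 - r_k(4)$, and insert the estimates $I_2 < 0.85\,I_1$ and $I_1 > 0.57\,c^{-(2k+1)}/(2k+1)^2$ against $|r_k(4)| \le \tfrac{4}{2k+1}h_2(4)^{-(2k+1)}$. Your anticipation that the threshold $k_0$ may exceed $4$ is correct: the paper's bookkeeping yields $k_0 = 9$, after which one verifies $b_3, \dots, b_{17} \le 0$ directly via the Lagrange inversion formula, exactly the fallback you describe.
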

\begin{proof}
Let $k\geq 4$ and let $I_1, I_2,\ldots, I_p, J $ be as defined in Lemma~\ref{approximations}. By \eqref{eq:bkint} with $\alpha=4$ and Lemma~\ref{approximations}\eqref{approx-2} and \eqref{approx-3}, we have 
\[
-b_{2k+1}  =I_1-I_2+\ldots+(-1)^{p-1}I_p+(-1)^p J-r_{2k+1}(5\sqrt{2}) > I_1-I_2-r_{2k+1}(5\sqrt{2}).
\]
By \eqref{eq:bkrem} and Lemma~\ref{approximations}\eqref{approx-4} with $c\approx 1.2059$ (established in its proof), we get 
\[
I_1-I_2>\frac{0.0855}{(2k+1)^2}(1.206)^{-(2k+1)}, \qquad |r_{2k+1}(4)|\leq \frac{4}{2k+1}(1.728)^{-(2k+1)}.
\]
Since $-b_{2k+1}> I_1-I_2-r_{2k+1}(4)$, we get $b_{2k+1}<0$ for $k\geq 9$. Direct computation using the Lagrange inversion formula gives us $b_3, b_5, \dots, b_{17} \leq 0$, proving  nonpositivity  for $k \le 8$.
\end{proof}

\section*{Acknowledgment}

We thank the anonymous referee for his careful reading and many helpful suggestions that greatly improved our article.
The work in this article is generously supported by DARPA D15AP00109 and NSF IIS 1546413. LHL acknowledges additional support from a DARPA Director's Fellowship and the Eckhardt Faculty Fund.

\bibliographystyle{abbrv}

\end{document}